\declaretheorem[numberwithin=section]{theorem}
\newcommand{\ZZ}{\mathbb{Z}}
\newcommand{\FF}{\mathbb{F}}
\newcommand{\SSS}{\mathbb{S}}
\newcommand{\HH}{\mathbb{H}}
\newcommand{\WW}{\mathbb{W}}
\newcommand{\uu}{u_{2}}
\renewcommand{\u}{u_{h-1}}
\newcommand{\F}{\underset{F}+}
\newcommand{\gF}{\underset{g_*F}+}
\definecolor{dark-red}{rgb}{0.6,0.15,0.15}
\definecolor{dark-blue}{rgb}{0.15,0.15,0.6}
\definecolor{medium-blue}{rgb}{0,0,0.5}
\numberwithin{equation}{section}
\newtheorem*{thm*}{Theorem}
\newtheorem{lemma}{Lemma}[section]
\theoremstyle{definition}
\newtheorem{note}{Note}[section]
\newtheorem{rem}{Remark}[section]
\let\c@equation=\c@thm
\let\c@lem=\c@thm
\let\c@theorem=\c@thm
\let\c@lemma=\c@thm
\let\c@Theorem=\c@thm
\let\c@Lemma=\c@thm
\let\c@cor=\c@thm
\let\c@corollary=\c@thm
\let\c@Corollary=\c@thm
\let\c@conj=\c@thm
\let\c@conjecture=\c@thm
\let\c@prop=\c@thm
\let\c@proposition=\c@thm
\let\c@defn=\c@thm
\let\c@definition=\c@thm
\let\c@Definition=\c@thm
\let\c@notation=\c@thm
\let\c@note=\c@thm
\let\c@exmp=\c@thm
\let\c@ex=\c@thm
\let\c@exmps=\c@thm
\let\c@rem=\c@thm
\let\c@warn=\c@thm
\let\c@claim=\c@thm
\let\c@convention=\c@thm
\let\c@conventions=\c@thm
\let\c@quest=\c@thm
\let\c@facts=\c@thm
\title[Morava Stabilizer Group Action]{A Computation of the Action of the Morava Stabilizer Group on the Lubin-Tate Deformation Ring}
\author{Andr\'e Davis}
\thanks{This material is based upon work supported by the National Science Foundation under Grant No. DMS-1906227.}
\begin{document}
	\maketitle
	\tableofcontents
	\begin{abstract}
		We compute  recursive approximations of the action of the height \(h \geq 2\) Morava stabilizer group on the associated Lubin-Tate deformation ring. We then specialize to the case \(h=3\) and \(p>2\) to calculate the action explicitly. These results are new for \(h>2\) and agree with computations  by Lader at height \(h=2\).
	\end{abstract}

	\section{Introduction}
		For a fixed prime \(p\) and an integer \(h \geq 2\), we let \(\HH_h\) denote the Honda formal group law with coefficients in the finite field \(\FF_{p^h}\). We are interested in its group of automorphisms, and in particular how this group acts on the Lubin-Tate deformation ring. This action appears frequently in the literature and has been described by Devinatz and Hopkins \cite{devinatz2004homotopy} as ``the most important group action in the whole chromatic approach to stable homotopy theory".
	
	It is well known that the Lubin-Tate deformation ring \(R_h\) is isomorphic to 
	\[R_h \cong \WW[[u_1,...,\u]]\]
	where \(\WW = W(\FF_{p^h})\) is the Witt vectors over \(\FF_{p^h}\). We denote by \(\SSS_h\) the automorphism group of \(\HH_h\). 
	
	The action of \(\SSS_h\) on \(R_h\) is extremely complicated. For instance, the associated cohomology groups 
	encode deep information about the stable homotopy category (see \cite{devinatz2004homotopy}, for example). In fact, it is difficult even to compute the leading term of the action. The bulk of
	Chai's paper \cite{chai1996group} is devoted to this computation. In this paper, we consider the action of \(\SSS_h\) only on certain elements of \(R_h\).
	More specifically, we look at the induced action on the quotient of \(R_h\) by the invariant ideal \(I_{h-1} = (p,u_1,...,u_{h-2})\). Several efforts in this direction have been made at height \(h=2\). In this case, of course, \(I_1 = (p)\).
	
	In the special case \(h=2\) and \(p=3\), an approximation of the action appears in
	the 2008 paper \cite{henn2013homotopy}. The authors use the Lubin-Tate theory of deformations to encode the action in the coefficients of a certain power series. This approach results in a system of recursive equations that are then solved explicitly.	
	In 2013, two independently-written papers extended the computations at height \(h=2\) to arbitrary primes \(p\).  One was the PhD thesis of Lader \cite{lader2013resolution}, who also used Lubin-Tate theory. See also \cite{beaudry2019computations} for a review of Lader's results in English, among other things.
	The other
	is a paper by Kohlhaase \cite{kohlhaase2013iwasawa}, who made use of the rigid analytic period morphism of Gross and Hopkins \cite{hopkins1994equivariant} as applied to computations by Devinatz-Hopkins in \cite{devinatz1995action}.

	We follow the program outlined by Lader to extend the computations to all heights \(h\) and all primes \(p\). As with the previous efforts, we obtain recursive formulas. We note that when evaluated at height \(h=2\), our formulas agree with previous computations, and provide the same level of accuracy. Finally, we specialize to \(h=3\) and \(p>2\) and solve our recursive equations explicitly. We note that while this final computation is long and tedious, it would not be difficult to generalize  to other small heights \(h\), particularly with the help of computers.

	\section{Organization and Main Results}	
	Our first task is to approximate the universal deformation \(F\) of the Honda formal group law, working at height \(h>2\). This is the main result of Section \ref{computation of F}, where we break the computation  into two parts. We first compute the exponential \(\exp(x)\) of \(F\) as the inverse of \(\log(x)\), the logarithm of \(F\). 	\begin{lemma}[Cf. \ref{exp}]
		Assume \(h >2\). Given 
		\[\log(x) = x + L_{h-1}x^{p^{h-1}} + L_hx^{p^h} \mod( x^{p^{h+1}}),  \]
		the inverse series is given by
		\[\exp(x) = x +\sum_{j=0}^{p-1} \frac{(-1)^{j+1}}{j+1} {p^{h-1}(j+1) \choose j }L_{{h-1}}^{j+1}x^{j(p^{h-1}-1)+p^{h-1}} -L_{h}x^{p^h} \ \mod (x^{p^h+1}).\]
	\end{lemma}
	
	This allows us to compute \(F\) as
	\[F(x,y) = \exp(\log(x) + \log(y)).\]	
	\begin{thm*}[Cf. \ref{F(x,y) computation}.]
		For $h>2$ and $F(x,y)$ as above, modulo $(u_1,...,u_{h-2},(x,y)^{p^{h}+1})$
		\begin{align*}
			F(x,y) &= x +y -\frac{u_{h-1}}{1-p^{p^{h-1}-1}}C_{p^{h-1}}(x,y) - \frac{1}{1-p^{p^h-1}}C_{p^h}(x,y) +\sum_{m=1}^{p-1}\u^{m+1}P_{m}(x,y)
		\end{align*}
		where 
		\[C_{p^n}(x,y) = \frac{1}{p}((x+y)^{p^n} - x^{p^n} -y^{p^n})\]
		and
		\begin{align*}P_{m}(x,y) =  \frac{1}{(p-p^{p^{h-1}})^{m+1}} \sum_{j=0}^{m} \frac{(-1)^{j+1}}{j+1}&{{p^{h-1}(j+1)} \choose j} {{j(p^{h-1}-1)+p^{h-1}} \choose {m-j}}\\
			&(x+y)^{p^{h-1}(j+1)-m}(x^{p^{h-1}} + y^{p^{h-1}})^{m-j}.
		\end{align*}
	\end{thm*}
	An arithmetic coincidence (cf. Remark \ref{h>2 is necessary}) splits the computation into two cases: \(h=2\) and \(h>2\). In particular, \(F\) takes a slightly different form in the case \(h=2\). See \cite{lader2013resolution} Lemma 3.1, or \cite{beaudry2019computations} Theorem 3.1.

	The universal deformation \(F\) is, in particular, a formal group law with coefficients in the Lubin-Tate deformation ring \(R_h\). It is universal in the sense that it induces a bijective correspondence between \(\star\)-isomorphism classes of deformations of \(\HH_h\) to complete local rings and continuous ring maps out of \(R_h\). 
	Roughly speaking, each automorphism of \(\HH_h\) determines a ``change of coordinates" that we may apply to \(F\) to create a new deformation with coefficients in \(R_h\). Each deformation is classified by an automorphism of \(R_h\), and these automorphisms together determine the action of \(\SSS_h\). See \cite{rezk1998notes} for an overview.

	Having an explicit formula for \(F\) allows us to access the action directly. 
	In Section \ref{The Action} we compute the action of an element \(g\in \SSS_h\) in terms of a unit \(t_0\) in \(R_h\). Although we must assume \(h>2\) to use Theorem \ref{F(x,y) computation}, our results agree with previous computations at \(h=2\).
	
	Before we  state our results, we recall that \(\SSS_h\) is isomorphic to the subgroup of units in
	\[ \WW\langle S \rangle / (S^h=p, Sa=a^{\sigma}S )\]
	where \(\WW = W(\FF_{p^h})\) are the Witt vectors, \(a\in \WW\) and \(\sigma\) is a lift of the Frobenius. Therefore 
	any \(g\in \SSS\) can be expressed as
	\[g = \sum_{i\geq 0}g_i S^i.\]

	We prove in Theorem \ref{action} that for all \(h\geq 2\)
	\[g_*(\u) = t_0^{p^{h-1}-1}\u \mod (p,u_1,...,u_{h-2})\]
	for a unit \(t_0\) such that \(t_0 \equiv g_0 \mod (p,u_1,...,u_{h-2},\u)\). We remark that this computation was previously known to the stable homotopy theory community. 
	
	The rest of Section \ref{The Action} is devoted to determining \(t_0\) in terms of the generators of \(R_h\). Several long computations produce the following recursive expressions. 
	\begin{thm*}[Cf. Theorem \ref{tk}, Theorem \ref{th}]
		Let \(g\in \SSS_h\) as above, and let \(h\geq 2\). For \(0\leq k\leq h-2\),
		\[t_k = t_k^{p^h}+\u t_{k+1}^{p^{h-1}}-\u^{p^{k+1}}t_{k+1}t_0^{p^{k+1}(p^{h-1}-1)} \mod I_{h-1}\]
		and
		\begin{dmath*}
			t_{h-1} 	=
			t_{h-1}^{p^{h}} + \u t_h^{p^{h-1}} 
			- \sum_{j=1}^{p-1}\frac{1}{p}{p \choose j}\u^{jp^{h-2}+1}t_1^{jp^{2h-3}}t_0^{p^{2h-2}(p-j)} \mod (I_{h-1},\u^{p^{h-1}+1}).
		\end{dmath*}
	\end{thm*}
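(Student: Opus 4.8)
The engine for both formulas will be the defining functional equation $h_g\big([p]_{g_*F}(x)\big)=[p]_F\big(h_g(x)\big)$, which I will reduce modulo the invariant ideal $I_{h-1}=(p,u_1,\dots,u_{h-2})$ and then read off coefficient-by-coefficient in $x$. Over $R/I_{h-1}\cong\FF_{p^h}[[\u]]$, an integral domain of characteristic $p$, the Frobenius is a ring endomorphism, so one has $\big(\underset{F}{\sum}_{i}a_ix^{p^i}\big)^{p^j}=\underset{F^{(p^j)}}{\sum}_{i}a_i^{p^j}x^{p^{i+j}}$, where $F^{(p^j)}$ denotes $F$ with its coefficients raised to the $p^j$-th power; by Theorem~\ref{F(x,y) computation} this simply replaces $\u$ by $\u^{p^j}$. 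Since $p\equiv u_1\equiv\cdots\equiv u_{h-2}\equiv 0$, since $g_*(\u)\equiv \u\,t_0^{p^{h-1}-1}$ by Theorem~\ref{action}, and since $h_g$ is a homomorphism $g_*F\to F$, the functional equation collapses mod $I_{h-1}$ to
\[ h_g\!\big(\u\,t_0^{p^{h-1}-1}x^{p^{h-1}}\big)\;\F\;h_g\!\big(x^{p^h}\big)\;=\;\u\,h_g(x)^{p^{h-1}}\;\F\;h_g(x)^{p^h}. \]
Writing $h_g(x)=\underset{F}{\sum}_{i}t_ix^{p^i}$ and expanding all four terms via the Frobenius identity above, the coefficient of $x^{p^{h+k}}$ — up to contributions from the nonlinear part of $F$ — will read $t_{k+1}\,\u^{p^{k+1}}t_0^{p^{k+1}(p^{h-1}-1)}+t_k=\u\,t_{k+1}^{p^{h-1}}+t_k^{p^h}$, which is exactly the asserted recursion for $t_k$ once the nonlinear corrections are shown to drop.

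For $0\le k\le h-2$ I expect those corrections to be killed by a pure degree count. The lowest-degree nonlinear term of $F$ mod $I_{h-1}$ is $-\u\,C_{p^{h-1}}(x,y)$, homogeneous of degree $p^{h-1}$; wherever a correction enters — the outer $F$-addition, the iterated $F$-sums building $h_g$, or the twisted laws $F^{(p^j)}$ — one of the two arguments already carries $x$-degree $\ge p^h>p^{h-1}$, and the elementary estimate that $C_{p^{h-1}}(y,z)=\tfrac1p\sum_{l=1}^{p^{h-1}-1}\binom{p^{h-1}}{l}y^lz^{p^{h-1}-l}$ (and the analogous higher terms $C_{p^h}$, $\u^{m+1}P_m$) then lies in $x$-degree $>p^{2h-2}$. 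Since the recursion for $t_k$ is extracted from the coefficient of $x^{p^{h+k}}$ with $h+k\le 2h-2$, all corrections are invisible and the identity holds with no restriction on the power of $\u$; this gives the first formula.

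For $t_{h-1}$ I would read the coefficient of $x^{p^{2h-1}}$, working modulo $\u^{p^{h-1}+1}$, where nonlinear terms genuinely survive. I anticipate three reductions: (i) the $i=h$ summand of $h_g\!\big(\u t_0^{p^{h-1}-1}x^{p^{h-1}}\big)$ would contribute $\u^{p^h}t_h t_0^{p^h(p^{h-1}-1)}$, which vanishes because $p^h\ge p^{h-1}+1$; (ii) every internal nonlinear correction (to the $F$-, $F^{(p^{h-1})}$-, $F^{(p^h)}$-sums, and to $h_g$ applied to a single power of $x$) either carries a factor $\u^{\ge p^{h-1}+1}$ or lands in $x$-degree $>p^{2h-1}$, hence dies; and (iii) of the nonlinear terms of $F$, only $-\u\,C_{p^{h-1}}$ can reach degree $p^{2h-1}$ with a small enough power of $\u$, the $C_{p^h}$ and $\u^{m+1}P_m$ terms being too large in degree or in $\u$. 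What then remains is the coefficient of $x^{p^{2h-1}}$ in $\u\,C_{p^{h-1}}(A,B)-\u\,C_{p^{h-1}}(C,D)$ with $A=h_g\!\big(\u t_0^{p^{h-1}-1}x^{p^{h-1}}\big)$, $B=h_g(x^{p^h})$, $C=\u\,h_g(x)^{p^{h-1}}$ and $D=h_g(x)^{p^h}$. Kummer's theorem gives $v_p\!\big(\binom{p^{h-1}}{l}\big)=1$ exactly for $l=jp^{h-2}$ with $1\le j\le p-1$; for the $(C,D)$-term the only monomial surviving the $\u$-truncation takes all $jp^{h-2}$ factors from the summand $\u\,t_1^{p^{h-1}}x^{p^h}$ of $C$ and all remaining factors from the leading term $t_0^{p^h}x^{p^h}$ of $D$, and together with the congruence $\tfrac1p\binom{p^{h-1}}{jp^{h-2}}\equiv\tfrac1p\binom pj\pmod p$ this yields exactly $\sum_{j=1}^{p-1}\tfrac1p\binom pj\,\u^{jp^{h-2}+1}t_1^{jp^{2h-3}}t_0^{p^{2h-2}(p-j)}$; the matching monomials in the $(A,B)$-term all carry binomial coefficients divisible by $p$ (again by Kummer's theorem, e.g. $\binom{p^{h-2}(p-j)}{jp^{h-3}}\equiv 0$), so they contribute nothing. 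Combining this with (i)–(iii) yields the formula for $t_{h-1}$. The one genuinely delicate step — the main obstacle — is (iii) together with this last vanishing claim: I must verify that among the many monomials of $A,B,C,D$ arising from mixtures of $F$-summands with different indices, only the single one described above survives both the degree constraint and the $\u$-adic truncation with a nonzero coefficient mod $p$.
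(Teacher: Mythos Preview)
Your approach is the same as the paper's: extract the recursion from the coefficient of $x^{p^{h+k}}$ (resp.\ $x^{p^{2h-1}}$) in the identity $h_g([p]_{g_*F}(x))=[p]_F(h_g(x))$ reduced modulo $I_{h-1}$. For $0\le k\le h-2$ your degree argument is precisely what the paper packages as Lemma~\ref{slayer} together with the associativity Lemma~\ref{distributor}, and your Kummer observation that only the indices $l=jp^{h-2}$ in $C_{p^{h-1}}$ survive mod $p$ is the content of Lemma~\ref{Cpn}. Your treatment of the right-hand side for $k=h-1$ (your $C,D$) also matches Lemma~\ref{rhs}.

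The one substantive difference is how the left-hand side is organized for $k=h-1$. You split $h_g([p]_{g_*F}(x))=h_g(\u t_0^{p^{h-1}-1}x^{p^{h-1}})\F h_g(x^{p^h})$ using that $h_g$ is a homomorphism; the paper instead groups by the index $i$ in $h_g=\sum^F t_i(\cdot)^{p^i}$, taking $A=t_0(g_*(\u)x^{p^{h-1}}\gF x^{p^h})$ and $B=\sum_{i\ge 1}^F t_i(\cdots)^{p^i}$. With the paper's grouping one writes $B=\u^pB_0+B_1$ where $B_1$ starts in $x$-degree $p^{h+1}$; then $B_1^{p^{h-2}}$ already sits at $x^{p^{2h-1}}$, so every $B_1$-contribution to $C_{p^{h-1}}(A,B)$ dies by degree, while the remaining $B_0$-part carries $\u^{p\cdot p^{h-2}}=\u^{p^{h-1}}$, whence $\u C_{p^{h-1}}(A,B)\equiv 0$ modulo $\u^{p^{h-1}+1}$ with no binomial analysis whatsoever. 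Your grouping forces exactly the Kummer/Lucas case analysis you flag as the main obstacle; it does go through (the $\binom{(p-j)p^{h-2}}{jp^{h-3}}$ term vanishes by Lucas, and any swap of a leading factor of your $A$ for a subleading one either again hits a vanishing $\binom{jp^{h-2}}{a}$ or pushes the $\u$-weight past $p^{h-1}$), but the paper's decomposition sidesteps this entirely.
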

	We remark that not only do these expressions agree with the previous computations at height \(h=2\) mentioned above, the accuracy of \(t_{h-1}\) in particular also matches.

	Finally, in Section \ref{h=3} we specialize to \(h=3\). Assuming \(p>2\), we are able to compute the following explicit formula for \(t_0\) as a polynomial in \(u_2\). 
	\begin{thm*}[Cf. Theorem \ref{th=3}]
		Assume \(h=3\) and \(p>2\). Let \(g = 1+g_1S+g_2S^2+g_3S^3 \mod (S^4)\). Then modulo \((p,u_1,\uu^{2p^2+p+1})\),
		\begin{dmath*}
			t_0
			=1+ \uu g_1^{p^2}
			- \sum_{i=0}^{p-1}(-1)^i \uu^{(i+1)p}g_1^{i+1} 
			-\sum_{i=0}^{p-2} (-1)^i\uu^{(i+1)p +1}g_1^ig_2^{p^2}
			+\uu^{p^2+1}\left(g_2^p - g_1^{p-1}g_2^{p^2}\right)
			- \sum_{i=0}^{p}\uu^{p^2+(i+1)p}g_1^{i}\left( (-1)^{p+i}g_1^{p+1} +  {{p^2-2}\choose{i}}g_1^{p+1} + (-1)^ig_2   +{{p^2-1}\choose {i-1}}g_2   \right) 
			- \uu^{p^2+p+1}\left(  g_3^p - g_3^{p^2} 
			\right)	
			-\sum_{j=1}^{p-1}\uu^{p^2+(j+1)p+1}g_1^{j-1} \left[C_j +\sum_{i=1}^j \frac{1}{p}{{p}\choose{i}}(-1)^{j-i}g_1   \right],
		\end{dmath*}	
		where
		\begin{dmath*}
			C_j =  (-1)^{p+j}g_1^{p+1}g_2^{p^2} + \left( (-1)^j(g_3^p-g_3^{p^2}) + {{p^2-2}\choose{j}}g_1^pg_2^{p^2}  \right)g_1 + {{p^2-2} \choose{j-1}}g_2^{p^2+1}.
		\end{dmath*}
	\end{thm*}
	
	The accuracy \(u_2^{2p^2+p+1}\)  is the highest achievable with our formulas. At arbitrary heights, our formulas theoretically may achieve an accuracy of \(\u^{p^{h-1} + \Phi(h)}\), where
	\[\Phi(h) = p^{h-1} +p^{h-2}+...+p+1.\] This accuracy matches that of previous computations at height \(h=2\).

\section*{Acknowledgments} The author would like to thank Agn\`es Beaudry for meticulously checking multiple drafts of this paper, for many helpful conversations, and for suggesting this project.

	\section{The universal deformation $F(x,y)$}\label{computation of F}
	
		Fix a prime \(p\). In this section we consider the Honda formal group law at height \(h>2\). We write 
		 \[\HH = \HH_h.\] This is the unique $p$-typical formal group law over $\FF_p$ with $p$-series
	\[[p]_{\mathbb{H}}(x)=x^{p^h}.\]
	We note that  $\mathbb{H}$ is also a formal group law over $\mathbb{F}_{p^h}$. 
	
	We begin with some generalities. Let \(\Gamma\) be a formal group law over a perfect field \(k\) of characteristic \(p\). For any complete local ring \(A\) with maximal ideal \(\mathfrak{m}\) and projection \(\pi:A\to A/\mathfrak{m}\), we may define the category of deformations of \(\Gamma\) to \(A\), written \(\text{Def}_\Gamma(A)\). The objects of \(\text{Def}_\Gamma(A)\)
	 are pairs \((H,i)\) where \(H\) is a formal group law over \(A\) and \(i:k \to A/\mathfrak{m}\) is a field inclusion such that
	\[ \pi_*H = i_*\Gamma. \]
	A morphism \((H_1,i_1) \to (H_2,i_2)\) is defined only when \(i_1=i_2\). In this case, it is a \(\star\)-isomorphism \(f:H_1\to H_2\), i.e. \(f\) is an isomorphism such that
	\(f(x) \equiv  x \mod \mathfrak{m}\).
	
	 In this paper, we focus on the case \(\Gamma = \HH\). The ring \(R \cong \mathbb{W} [[u_1,...,u_{h-1}]]\) is complete with respect to its maximal ideal \(I_{h} = (p,u_1,u_2,...,\u)\), and represents the functor \(A \mapsto \text{Def}_\HH(A)\).  We will construct a universal deformation \((F,
	\mathrm{id}) \in \text{Def}_{\HH}(R)\) with the property that if \((H,i)\in \text{Def}_{\HH}(A)\), then there exists a unique continuous \(\WW\)-algebra homomorphism
	\[\iota: R \to A \]
	and a unique \(\star\)-isomorphism \(\iota_*F \overset{\cong}\to H \). 
	
	To define \(F\), we consider the universal
	ring \(V \cong \ZZ_{(p)}[v_1,v_2,...]\) which represents \(p\)-typical formal group laws over \(\ZZ_{(p)}\)-algebras. Let \(G(x,y)\) be the universal \(p\)-typical formal group law over \(V\).  Let
	\[\log_G(x) =  \sum_{i\geq 0}l_i x^{p^i}\]
	denote 	the logarithm of \(G\). Here \(l_0=1\).
	We use the Araki generators \(v_1,v_2,...\) which are defined by \(v_0=p\), and
	\begin{equation}\label{araki}
		p l_n = \sum_{0\leq i \leq n}l_i v_{n-i}^{p^i}.
	\end{equation}

	Summing \eqref{araki} over all \(n\geq 0\) and then applying \(\exp_G\), the inverse of \(\log_G\), to both sides gives
	\[ [p]_G(x)= {\sum_{i\geq 0}}^G v_i x^{p^i}.  \]
	Define \(\phi : V\to R\) by
	\begin{equation}\label{phi}
		\phi(v_i) =\begin{cases} u_i & \text{if} \ 1\leq i \leq h-1 \\
			1 & \text{if} \ i=h \\
			0 & \text{if} \ i>h. 
		\end{cases}
	\end{equation}
We define \[F(x,y) =\phi_*G(x,y).\]
Letting \(L_i = \phi(l_i)\), the formal group law \(F(x,y)\)  is \(p\)-typical with \(p\)-series
	\[[p]_F(x)= px \F u_1x^p \F...\F \u x^{p^{h-1}} \F x^{p^h},\]
	logarithm
	\[\log_F(x) = \sum_{i\geq 0} L_ix^{p^i},\]
	and is a universal deformation of \(\HH\) to \(R\).
	
	From now on we will abbreviate $\log(x) = \log_F(x)$ and will let $\exp(x) = \exp_F(x)$ be the inverse of $\log(x)$ so that 
	\begin{equation}\label{explog}
		F(x,y) = \exp( \log(x) + \log(y)).
		\end{equation}
	
	We note that \eqref{araki} implies, modulo \((u_1,...,u_{h-2})\), that \(L_i =0 \) for \(0\leq i <h-1\) and
	\begin{equation}\label{Ls}
		L_{h-1} = \frac{\u}{p-p^{p^{h-1}}}, \hspace{1cm} L_h = \frac{1}{p-p^{p^h}}.
		\end{equation}
	
We will use \eqref{explog} to prove Theorem \ref{F(x,y) computation}. We first need to calculate \(\exp(x)\), at least up to a sufficiently high power of \(x\). We follow Lader's approach in \cite{lader2013resolution}.

\begin{theorem}\label{exp}
	Assume \(h >2\). Given 
	\[\log(x) = x + L_{h-1}x^{p^{h-1}} + L_hx^{p^h} \mod( x^{p^{h+1}}),  \]
	the inverse series is given by
		\[\exp(x) = x +\sum_{j=0}^{p-1} \frac{(-1)^{j+1}}{j+1} {p^{h-1}(j+1) \choose j }L_{{h-1}}^{j+1}x^{j(p^{h-1}-1)+p^{h-1}} -L_{h}x^{p^h} \ \mod (x^{p^h+1}).\]
	\end{theorem}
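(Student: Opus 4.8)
The plan is to invert $\log$ directly from the relation $\log(\exp(x)) = x$, first stripping off the top coefficient $L_h$ and then applying Lagrange inversion to what remains.

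First I would write $\exp(x) = x + E(x)$ and substitute $y = \exp(x)$ into the hypothesis $\log(y) \equiv y + L_{h-1}y^{p^{h-1}} + L_h y^{p^h} \pmod{y^{p^h+1}}$; using $\log(\exp(x)) = x$ this gives, modulo $x^{p^h+1}$,
\[ E(x) \equiv -L_{h-1}\bigl(x + E(x)\bigr)^{p^{h-1}} - L_h\bigl(x+E(x)\bigr)^{p^h}. \]
The right-hand side shows $E(x)$ has no monomial of degree $< p^{h-1}$, so $\bigl(x+E(x)\bigr)^{p^h} \equiv x^{p^h}$ modulo $x^{p^h+1}$; moreover, if I set $E(x) = \widetilde E(x) - L_h x^{p^h}$, then the summand $-L_h x^{p^h}$, wherever it enters an expansion of $\bigl(x+E(x)\bigr)^{p^{h-1}}$, raises the degree above $p^h$. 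Hence the relation collapses to
\[ \widetilde E(x) \equiv -L_{h-1}\bigl(x + \widetilde E(x)\bigr)^{p^{h-1}} \pmod{x^{p^h+1}}, \qquad \exp(x) \equiv x + \widetilde E(x) - L_h x^{p^h}, \]
and the problem reduces to computing $\widetilde E$.

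Next I would substitute $w = x + \widetilde E(x)$, turning the relation into $x = w + L_{h-1}w^{p^{h-1}} = w\bigl(1 + L_{h-1}w^{p^{h-1}-1}\bigr)$. Working over $R\otimes\QQ$ (where $L_{h-1}$ and $L_h$ live), Lagrange--B\"urmann inversion gives $w = \sum_{n\geq 1}\tfrac1n\bigl[t^{n-1}\bigr]\bigl(1+L_{h-1}t^{p^{h-1}-1}\bigr)^{-n}x^n$, whose only nonvanishing terms sit at $n = k(p^{h-1}-1)+1$ with coefficient $\tfrac1n\binom{-n}{k}L_{h-1}^k$. This exponent equals $kp^{h-1}-(k-1)$; for $k = p+1$ it is $(p+1)p^{h-1}-p$, which exceeds $p^h$ exactly because $h > 2$ (at $h = 2$ it would equal $p^h$), so modulo $x^{p^h+1}$ only $k = 1,\dots,p$ survive beyond the linear term. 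Reindexing $j = k-1$, then using $\binom{-n}{k} = (-1)^k\binom{n+k-1}{k}$ — here $n+k-1 = kp^{h-1}$, so $\binom{-n}{k} = (-1)^k\binom{kp^{h-1}}{k}$ — and finally the absorption identity $\binom{m}{k} = \tfrac{m-k+1}{k}\binom{m}{k-1}$ with $m = (j+1)p^{h-1}$, the coefficient simplifies to $\tfrac{(-1)^{j+1}}{j+1}\binom{p^{h-1}(j+1)}{j}L_{h-1}^{j+1}$ at $x^{(j+1)p^{h-1}-j}$. Substituting back into $\exp(x)\equiv x + \widetilde E(x) - L_h x^{p^h}$ finishes the argument.

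The computations are routine; the one place that needs care is the truncation bookkeeping — checking that neither the $-L_h x^{p^h}$ correction nor the Lagrange terms of index $k \geq p+1$ contribute modulo $x^{p^h+1}$, which is precisely what the hypothesis $h > 2$ provides. I should add that one can avoid Lagrange inversion entirely: iterating $\widetilde E = -L_{h-1}(x+\widetilde E)^{p^{h-1}}$ forces the ansatz $\widetilde E(x) = \sum_{j\geq 0} c_j\, x^{(j+1)p^{h-1}-j}$, and matching coefficients degree by degree yields a recursion for the $c_j$ with the same closed form — this is closer to Lader's approach in \cite{lader2013resolution} but more laborious to write out.
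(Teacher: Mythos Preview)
Your argument is correct, and the overall engine is the same as the paper's --- Lagrange inversion --- but you organize it differently. The paper applies the general Lagrange inversion formula directly to the three-term series $x + L_{h-1}x^{p^{h-1}} + L_h x^{p^h}$ and then sorts out which multi-indices $(c_{p^{h-1}-1}, c_{p^h-1})$ satisfy the constraint $(p^{h-1}-1)c_{p^{h-1}-1} + (p^h-1)c_{p^h-1} = n-1$ for $n \leq p^h$; the case split (either $n = p^h$ with $c_{p^h-1}=1$, or $n < p^h$ with $c_{p^h-1}=0$) is where the $-L_h x^{p^h}$ term and the sum over $j$ emerge. You instead strip off the $L_h$ contribution \emph{before} inverting, by the substitution $E = \widetilde E - L_h x^{p^h}$ and a valuation argument, so that Lagrange--B\"urmann is applied to the simpler two-term series $w + L_{h-1}w^{p^{h-1}}$. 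This buys you cleaner combinatorics (a single index $k$ rather than a pair), at the cost of the preliminary truncation bookkeeping; the paper's version is more direct but has to track two indices. Both routes isolate the hypothesis $h>2$ at exactly the same spot: your $k=p+1$ term has degree $(p+1)p^{h-1}-p$, which exceeds $p^h$ iff $h>2$, matching the paper's Remark following the proof.
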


	\begin{proof}
		
		For a formal power series		
		\[f(x) = \sum_{i\geq 1} a_ix^i\]
		with inverse
		\[g(x) = \sum_{i\geq 1}b_i x^i,\]		
		the Lagrange inversion formula (see 4.5.12 in
		\cite{morse1954methods}) gives the coefficients of \(g(x)\) as follows:
		\[b_1 = \frac{1}{a_1}\]
	and for \(n>1\)	
		\begin{equation}\label{b_n}
			b_n = \frac{1}{na_1^n}\sum_{c_1,c_2,\ldots}(-1)^{c_1+c_2+\ldots} \frac{n(n+1)\ldots (n-1+c_1+c_2+\ldots )}{c_1!c_2!\ldots} {\left(\frac{a_2}{a_1}\right)}^{c_1}{\left(\frac{a_3} {a_1}\right)}^{c_2}\ldots. 
			\end{equation}
		Where \( c_1,c_2,c_3,... \geq 0\) and
		\[ c_1 + 2c_2 + 3c_3+... = n-1.\]
		In the present case, 
		\[ f(x) = \log(x) = x+L_{{h-1}}x^{p^{h-1}} + L_{h}x^{p^h} \mod (x^{p^{h+1}}). \]
		Hence
		\[ a_i = \begin{cases}
			1 & \text{if} \ i=1 \\
			L_{{h-1}} & \text{if} \ i = p^{h-1} \\
			L_{h} & \text{if} \ i=p^h\\
			0 & \text{otherwise}
		\end{cases}.\]		
		
		Therefore, for fixed \(n>1\), the only nonzero terms in \eqref{b_n} occur when \(c_i=0\) for all \(i\) except \(i=p^{h-1}-1\) or \(i=p^h-1\). Whence \eqref{b_n} reduces to
		\begin{equation}\label{b_n reduced}
			b_n = \frac{1}{n}\sum_{c_1,c_2,\ldots}(-1)^{c_1+c_2+\ldots} \frac{n(n+1)\ldots (n-1+c_1+c_2+\ldots )}{c_1!c_2!\ldots} {{a_{p^{h-1}}}}^{c_{p^{h-1}-1}}{a_{p^{h}} }^{c_{p^h-1}},
			\end{equation}
	where
		\begin{equation}\label{c_i}
			(p^{h-1} -1)c_{p^{h-1}-1} + (p^h-1)c_{p^h-1} = n-1. 
		\end{equation}
		Since we're working modulo \(x^{p^{h}+1}\), we take $1 < n \leq p^h$. 
		
		If $n=p^h$, the only solution to \eqref{c_i} is  $c_{p^h-1} =1$ and $c_{p^{h-1}-1}=0$. If $n <p^h$, it's clear that $c_{p^h-1} =0$. In this case, the only solutions to \eqref{c_i} are $c_{p^{h-1}-1}=i$, when $n = i(p^{h-1}-1)+1$, $i=1,2,...,p$.  We substitute into \eqref{b_n reduced} to arrive at
		\begin{align*}
			b_n &= \begin{cases}
				\frac{(-1)^i}{n} \frac{n(n+1)...(n-1+i)}{i!}L_{h-1}^i & \text{if} \ n=i(p^{h-1}-1)+1\\
				-L_{h} & \text{if} \ n=p^h
			\end{cases}\\
			&= \begin{cases}
				\frac{(-1)^i}{i}{{p^{h-1}i} \choose {i-1}}L_{{h-1}}^i & \text{if} \ n=i(p^{h-1}-1)+1 \\
				-L_{h} & \text{if} \ n=p^h.
			\end{cases}
		\end{align*}

		Therefore,
		\[\exp(x) = x+\left ( \sum_{i=1}^p \frac{(-1)^i}{i}{{p^{h-1}i} \choose {i-1}}L_{{h-1}}^i x^{i(p^{h-1}-1)+1}    \right ) - L_{h}x^{p^h} \ \mod (x^{p^h+1}). \]
		 We reindex $\exp(x)$ by setting $j=i-1$ to complete the proof.	
		\end{proof}		
		
\begin{rem}\label{h>2 is necessary}
	{Since $h>2$, $i=p+1$ does not give a solution to \eqref{c_i}. This is the only part of the proof where we use this assumption. Cf. \cite{lader2013resolution} Lemma 3.1.}
\end{rem}

We now come to the main result of this section. 
	
		\begin{restatable}{theorem}{universaldeformation}
			\label{F(x,y) computation}
			For $h>2$ and $F(x,y)$ as above, modulo $(u_1,...,u_{h-2},(x,y)^{p^{h}+1})$
			\begin{align*}
				F(x,y) &= x +y -\frac{u_{h-1}}{1-p^{p^{h-1}-1}}C_{p^{h-1}}(x,y) - \frac{1}{1-p^{p^h-1}}C_{p^h}(x,y) +\sum_{m=1}^{p-1}\u^{m+1}P_{m}(x,y)
			\end{align*}
			where 
			$$C_{p^n}(x,y) = \frac{1}{p}((x+y)^{p^n} - x^{p^n} -y^{p^n})$$
			and
			\begin{align*}P_{m}(x,y) =  \frac{1}{(p-p^{p^{h-1}})^{m+1}} \sum_{j=0}^{m} \frac{(-1)^{j+1}}{j+1}&{{p^{h-1}(j+1)} \choose j} {{j(p^{h-1}-1)+p^{h-1}} \choose {m-j}}\\
				&(x+y)^{p^{h-1}(j+1)-m}(x^{p^{h-1}} + y^{p^{h-1}})^{m-j}.
			\end{align*}
			
		\end{restatable}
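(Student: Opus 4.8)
The plan is to combine the exponential--logarithm formula \eqref{explog} for $F$ with the explicit inverse series computed in Theorem~\ref{exp}. Write
\[
s(x,y)=\log(x)+\log(y).
\]
Modulo $(u_1,\dots,u_{h-2})$ we have $\log(z)\equiv z+L_{h-1}z^{p^{h-1}}+L_h z^{p^h}\mod (z^{p^{h+1}})$, which is precisely the hypothesis of Theorem~\ref{exp}; hence, modulo $(u_1,\dots,u_{h-2},(x,y)^{p^{h+1}})$,
\[
s(x,y)\equiv (x+y)+L_{h-1}\bigl(x^{p^{h-1}}+y^{p^{h-1}}\bigr)+L_h\bigl(x^{p^h}+y^{p^h}\bigr).
\]
Since $s(x,y)$ is a power series with lowest-order term $x+y$, any $s^k$ with $k>p^h$ lies in $(x,y)^{p^h+1}$, so substituting $s$ into the series of Theorem~\ref{exp} gives, modulo $(u_1,\dots,u_{h-2},(x,y)^{p^h+1})$,
\[
F(x,y)=\exp(s)\equiv s+\sum_{j=0}^{p-1}\frac{(-1)^{j+1}}{j+1}\binom{p^{h-1}(j+1)}{j}L_{h-1}^{\,j+1}s^{\,n_j}-L_h\,s^{\,p^h},
\]
where $n_j=j(p^{h-1}-1)+p^{h-1}$.

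The next step is to expand each power of $s$ by the multinomial theorem, writing $s=A_0+A_1+A_2$ with $A_0=x+y$, $A_1=L_{h-1}(x^{p^{h-1}}+y^{p^{h-1}})$, $A_2=L_h(x^{p^h}+y^{p^h})$, and to discard every monomial of degree exceeding $p^h$. Three elementary degree estimates control what survives: (i) a factor $A_2$ inside $s^k$ with $k\geq 2$ forces degree $\geq p^h+1$, so $A_2$ contributes only through the linear term $s$; (ii) $s^{p^h}\equiv (x+y)^{p^h}$; and (iii) in $s^{n_j}$ the monomial $\binom{n_j}{b}(x+y)^{n_j-b}A_1^{\,b}$ has degree $n_j+b(p^{h-1}-1)$, which is $\leq p^h$ exactly when $b\leq p-1-j$. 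Estimate (iii) is where the hypothesis $h>2$ enters, since it makes $(p-1)/(p^{h-1}-1)<1$; for $h=2$ an additional value $b=p-j$ would survive, in agreement with the remark following Theorem~\ref{exp}.

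It then remains to sort the surviving terms by the power of $L_{h-1}$ they carry and simplify using \eqref{Ls}. The $L_{h-1}^0$ part is
\[
(x+y)-L_h\bigl((x+y)^{p^h}-x^{p^h}-y^{p^h}\bigr)=(x+y)-pL_h\,C_{p^h}(x,y),\qquad pL_h=\frac{1}{1-p^{p^h-1}}.
\]
The $L_{h-1}^1$ part collects the $A_1$ in $s$ together with the $b=0$ term of the $j=0$ summand, giving
\[
-L_{h-1}\bigl((x+y)^{p^{h-1}}-x^{p^{h-1}}-y^{p^{h-1}}\bigr)=-pL_{h-1}\,C_{p^{h-1}}(x,y),\qquad pL_{h-1}=\frac{\u}{1-p^{p^{h-1}-1}}.
\]
Finally, for $1\leq m\leq p-1$, the $L_{h-1}^{m+1}$ part gathers, over $0\leq j\leq m$, the $b=m-j$ term of the $j$-th summand; substituting $n_j-(m-j)=(j+1)p^{h-1}-m$ and $\binom{n_j}{m-j}=\binom{j(p^{h-1}-1)+p^{h-1}}{m-j}$, and using \eqref{Ls} to rewrite $L_{h-1}^{m+1}$ as $\u^{m+1}/(p-p^{p^{h-1}})^{m+1}$, reproduces exactly $\u^{m+1}P_m(x,y)$. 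Summing the three pieces yields the stated formula.

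I expect the degree bookkeeping in the multinomial expansions to be the only real obstacle: one must check carefully that no cross term carrying a factor $A_2$, and no term with $b\geq p-j$, survives truncation at $(x,y)^{p^h+1}$. This is precisely the place where the assumption $h>2$ is used, and it is what distinguishes the present computation from Lader's at $h=2$.
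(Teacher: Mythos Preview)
Your proposal is correct and follows essentially the same route as the paper: substitute $\log(x)+\log(y)$ into the formula for $\exp$ from Theorem~\ref{exp}, drop the $L_h$ contribution from the middle powers by a degree count, expand the remaining $(A_0+A_1)^{n_j}$ binomially, truncate at $j+b\leq p-1$ (the paper writes this as $j+l\leq p-1$ after the same degree computation), and reindex by $m=j+b$. The only cosmetic difference is that you organize the surviving terms by the power of $L_{h-1}$ at the end, whereas the paper reindexes first and then substitutes~\eqref{Ls}; the computations are otherwise identical.
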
		
				
	\begin{proof}			
				
		We will work modulo   $ (u_1,...,u_{h-2}, (x,y)^{p^h+1})$ to compute \(F(x,y)\) as \[F(x,y) = \exp(\log(x) + \log(y)).\]
		We recall that
			\[ \log(x) = x+L_{{h-1}}x^{p^{h-1}} + L_{h}x^{p^h} \mod (x^{p^{h+1}}), \]
		 and by Theorem \ref{exp},
		\[\exp(x) = x +\sum_{j=0}^{p-1} \frac{(-1)^{j+1}}{j+1} {p^{h-1}(j+1) \choose j }L_{{h-1}}^{j+1}x^{j(p^{h-1}-1)+p^{h-1}} -L_{h}x^{p^h} \ \mod (x^{p^h+1}).\]
	We first evaluate the sum indexed by $j$ above  at $\log(x)+\log(y)$ modulo \((x,y)^{p^h+1}\) to obtain
		\begin{align*}
			&\sum_{j=0}^{p-1} \frac{(-1)^{j+1}}{j+1} {p^{h-1}(j+1) \choose j }L_{{h-1}}^{j+1}((x+y)+L_{{h-1}}(x^{p^{h-1}}+y^{p^{h-1}})+L_{h}(x^{p^h}+y^{p^h}))^{j(p^{h-1}-1)+p^{h-1}} \\
			&=\sum_{j=0}^{p-1} \frac{(-1)^{j+1}}{j+1} {p^{h-1}(j+1) \choose j }L_{{h-1}}^{j+1}((x+y)+L_{{h-1}}(x^{p^{h-1}}+y^{p^{h-1}}))^{j(p^{h-1}-1)+p^{h-1}} \\
			&=\sum_{j=0}^{p-1} \frac{(-1)^{j+1}}{j+1} {{p^{h-1}(j+1)} \choose j}L_{{h-1}}^{j+1} \sum_{l=0}^{j(p^{h-1}-1)+p^{h-1}} {{j(p^{h-1}-1)+p^{h-1}} \choose l}\\
			& \hspace{8.5cm} (x+y)^{j(p^{h-1}-1)+p^{h-1}-l}L_{h-1}^l(x^{p^{h-1}}+y^{p^{h-1}})^l\\
			&=\sum_{j=0}^{p-1} \sum_{l=0}^{j(p^{h-1}-1)+p^{h-1}} \frac{(-1)^{j+1}}{j+1}{{p^{h-1}(j+1)} \choose j}{{j(p^{h-1}-1)+p^{h-1}} \choose l} L_{{h-1}}^{j+l+1}\\
			& \hspace{8.5cm} (x+y)^{j(p^{h-1}-1)+p^{h-1}-l}(x^{p^{h-1}}+y^{p^{h-1}})^l. 
		\end{align*}
		For fixed \(j,l\), the terms of the above sum are homogeneous  of degree
		\[j(p^{h-1}-1)+p^{h-1}-l+p^{h-1}l = (j+l)(p^{h-1}-1) +p^{h-1},\]
		and therefore will vanish modulo \( (x,y)^{p^{h}+1}\) if $j+l \geq p$. So we may restrict the upper bound on the inner sum to $l = p-j-1$ to obtain
		\begin{align*}
			&\sum_{j=0}^{p-1} \sum_{l=0}^{p-j-1} \frac{(-1)^{j+1}}{j+1}{{p^{h-1}(j+1)} \choose j}{{j(p^{h-1}-1)+p^{h-1}} \choose l} \\
			&\hspace{8cm} L_{{h-1}}^{j+l+1} (x+y)^{j(p^{h-1}-1)+p^{h-1}-l}(x^{p^{h-1}}+y^{p^{h-1}})^l \\
			&=\sum_{j=0}^{p-1} \sum_{m=j}^{p-1} \frac{(-1)^{j+1}}{j+1}{{p^{h-1}(j+1)} \choose j}{{j(p^{h-1}-1)+p^{h-1}} \choose {m-j}}\\
			&\hspace{8cm} L_{{h-1}}^{m+1} (x+y)^{p^{h-1}(j+1)-m}(x^{p^{h-1}}+y^{p^{h-1}})^{m-j} 
		\end{align*}
		where we have set $m=j+l$ in the second line.
		
		The conditions $0\leq j \leq p-1$ and $ j \leq m \leq p-1$ are equivalent to $0\leq m \leq p-1$ and $0\leq j \leq m$.  Therefore, the middle term of $\exp(\log(x)+\log(y))$ is equal to
		\[
		\sum_{m=0}^{p-1} \sum_{j=0}^{m} \frac{(-1)^{j+1}}{j+1}{{p^{h-1}(j+1)} \choose j}{{j(p^{h-1}-1)+p^{h-1}} \choose {m-j}} L_{{h-1}}^{m+1} (x+y)^{p^{h-1}(j+1)-m}(x^{p^{h-1}}+y^{p^{h-1}})^{m-j}.
		\]
		
		The last term of $\exp(x)$ , $L_{h}x^{p^h}$ evaluated at $\log(x)+\log(y)$ becomes
		$$L_{h}((x+y) + L_{{h-1}}(x^{p^{h-1}} + y^{p^{h-1}}) + L_{h}(x^{p^h} + y^{p^h}))^{p^h} = L_{h}(x+y)^{p^h} \ \mod (x,y)^{p^h+1}.$$

		Finally, we combine the above results and use \ref{Ls} to substitute for \(L_h\) and \(L_{h-1}\) to obtain
		\begin{align*}
			F(x,y) = x&+y + L_{h-1}(x^{p^{h-1}}+y^{p^{h-1}})+ L_h(x^{p^h} + y^{p^h}) \\
			&+ \sum_{m=0}^{p-1}\sum_{j=0}^{m}\frac{(-1)^{j+1}}{j+1}{{p^{h-1}(j+1)}\choose{j}}{{j(p^{h-1}-1)+p^{h-1}}\choose {m-j}}L_{h-1}^{m+1} \\
			&\hspace{5cm} (x+y)^{p^{h-1}(j+1)-m}(x^{p^{h-1}}+y^{p^{h-1}})^{m-j} \\
			& -L_h(x+y)^{p^h} \\
			= x&+y -pL_{h-1}C_{p^{h-1}}(x,y) - pL_hC_{p^h}(x,y) \\
			&+ \sum_{m=1}^{p-1}\sum_{j=0}^{m}\frac{(-1)^{j+1}}{j+1}{{p^{h-1}(j+1)}\choose{j}}{{j(p^{h-1}-1)+p^{h-1}}\choose {m-j}}L_{h-1}^{m+1} \\
			&\hspace{5cm} (x+y)^{p^{h-1}(j+1)-m}(x^{p^{h-1}}+y^{p^{h-1}})^{m-j} \\
			=x& +y -\frac{u_{h-1}}{1-p^{p^{h-1}-1}}C_{p^{h-1}}(x,y) - \frac{1}{1-p^{p^h-1}}C_{p^h}(x,y) \\
			&+\sum_{m=1}^{p-1}\u^{m+1}P_{m}(x,y).
	\qedhere	\end{align*}
	\end{proof}

	\section{The action of \(\SSS_h\) on \(R\)}\label{The Action}
	We now turn to the action of the automorphism group \(\SSS=\SSS_h\) of the height \(h>2\) Honda formal group law \(\HH\) on the associated Lubin-Tate deformation ring \(R\) modulo the invariant ideal \[I_{h-1}=(p,u_1,...,u_{h-2}).\]
	Good references for this material are Appendix A2 of \cite{ravenel2003complex} and section 4 of \cite{henn2013homotopy}.

We recall that \(\SSS\) is the subgroup of units in
	\[ \WW\langle S \rangle / (S^h=p, Sa=a^{\sigma}S )\]
where \(\WW = W(\FF_{p^h})\) are the Witt vectors, \(a\in \WW\) and \(\sigma\) is a lift of the Frobenius. Therefore 
any \(g\in \SSS\) can be expressed as
\[g = \sum_{i\geq 0}g_i S^i.\] 

We now describe the action of \(\mathbb{S}\) on \(R\). We may lift  \(g\in \SSS\) to an element \(\hat{g}(x) \in \WW[[x]]\) and define a formal group law over \(R\) by
	\[\hat{F}(x,y)= \hat{g}^{-1}(F(\hat{g}(x),\hat{g}(y))).\]
	We note that \((\hat{F},\mathrm{id})\) is a deformation of \(\HH\) to \(R\), and therefore determines a unique homomorphism \(g_*:R\to R\) and a unique \(\star\)-isomorphism from \(g_*F\) to \(\hat{F}\). The composition
	\[h_g: g_*F \to \hat{F} \overset{\hat{g}}\to F \]
	 is independent of the choice of lift \(\hat{g}\). Since \(h_g\) is an isomorphism of \(p\)-typical formal group laws, it can be written as
	\[h_g(x) = {\sum_{i\geq 0}}^F t_i(g)x^{p^i}\] for unique continuous functions 
	$t_i: \mathbb{S} \to R$. We will abbreviate $t_i = t_i(g)$. The $t_i$ have the property that $t_i(g) = g_i$ modulo $(p,u_1,...,\u)$, and \(t_0\) is a unit.  Since \(h_g\) is a morphism of formal group laws,
	\begin{equation}\label{recursion}
		h_g([p]_{g_*F}(x)) = [p]_{F}(h_g(x)).
	\end{equation}

	The following easy theorem describes the action of \(g\in \mathbb{S}\) in terms of the function \(t_0\).
	\begin{restatable}{theorem}{action}\label{action}
		Let $g\in \mathbb{S}$ as above, and let \(h\geq 2\). Then
		\[g_*(\u) = \u t_0^{p^{h-1}-1} \ \mod I_{h-1}.\]
	\end{restatable}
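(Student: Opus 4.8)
The plan is to extract the coefficient of $x^{p^{h-1}}$ from the fundamental relation \eqref{recursion}, $h_g([p]_{g_*F}(x)) = [p]_F(h_g(x))$, working throughout modulo the invariant ideal $I_{h-1} = (p,u_1,\dots,u_{h-2})$. Modulo $I_{h-1}$ the $p$-series of $F$ simplifies drastically: since $p \equiv 0$ and $u_1,\dots,u_{h-2}\equiv 0$, we get $[p]_F(x) \equiv \u x^{p^{h-1}} \underset{F}{+} x^{p^h}$, and likewise $[p]_{g_*F}(x) \equiv g_*(\u)\, x^{p^{h-1}} \underset{g_*F}{+} x^{p^h}$. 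So the relation becomes
\[
h_g\!\left( g_*(\u)\, x^{p^{h-1}} \underset{g_*F}{+} x^{p^h}\right) \;=\; \u\, h_g(x)^{p^{h-1}} \underset{F}{+} h_g(x)^{p^h} \pmod{I_{h-1}}.
\]

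Next I would compare the lowest-degree terms. The function $h_g(x) = \sum^F_{i\ge 0} t_i x^{p^i}$ has leading term $t_0 x$, so $h_g(x)^{p^{h-1}}$ has leading term $t_0^{p^{h-1}} x^{p^{h-1}}$, and $h_g(x)^{p^h}$ starts in degree $p^h$; on the right-hand side the lowest-degree contribution is therefore $\u\, t_0^{p^{h-1}} x^{p^{h-1}}$. On the left-hand side, the lowest-degree term of $g_*(\u) x^{p^{h-1}} \underset{g_*F}{+} x^{p^h}$ is $g_*(\u) x^{p^{h-1}}$ (here I should note $g_*(\u) \equiv \u g_*^{(\text{something})}$ is itself divisible by $\u$ since $g_*$ is a $\WW$-algebra map fixing the ideal structure appropriately — more precisely $g_*(\u) \in (\u)$ modulo $I_{h-1}$, which needs a one-line justification from the fact that $I_{h-1}$ and $I_h$ are invariant), so feeding it into $h_g$, whose leading coefficient is $t_0$, the lowest term is $t_0 \cdot g_*(\u)\, x^{p^{h-1}}$. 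Equating the coefficients of $x^{p^{h-1}}$ gives $t_0\, g_*(\u) = \u\, t_0^{p^{h-1}} \pmod{I_{h-1}}$, and dividing by the unit $t_0$ yields $g_*(\u) = \u\, t_0^{p^{h-1}-1} \pmod{I_{h-1}}$.

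The one subtlety I would be careful about is making sure no other terms contribute in degree $p^{h-1}$: on the right, $h_g(x)^{p^h}$ is too high in degree, and the $\underset{F}{+}$ only adds higher-degree corrections, so $\u t_0^{p^{h-1}} x^{p^{h-1}}$ is genuinely the whole degree-$p^{h-1}$ part; on the left, the term $x^{p^h}$ inside and the higher $t_i x^{p^i}$ ($i\ge 1$) terms of $h_g$ all push contributions to degree $\ge p^h$ or $\ge p^{h-1}\cdot p$, hence do not interfere. I expect the only real point requiring argument — the ``main obstacle,'' though it is mild — is justifying that $g_*(\u)$ is divisible by $\u$ modulo $I_{h-1}$ (equivalently, that $g_*$ preserves the ideal $(\u) + I_{h-1} = I_h$), which follows from the invariance of the ideals $I_{h-1}$ and $I_h$ under the $\SSS_h$-action, or alternatively can be read off directly once the comparison of leading terms is set up, since the degree-$p^{h-1}$ coefficient equation forces $g_*(\u)$ into the ideal generated by $\u$ after inverting $t_0$. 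Everything else is bookkeeping with the formal sum $\underset{F}{+}$ and the observation that $F(x,y) \equiv x+y$ to lowest order.
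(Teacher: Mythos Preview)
Your proposal is correct and follows essentially the same approach as the paper: compare the coefficient of $x^{p^{h-1}}$ on both sides of \eqref{recursion} modulo $I_{h-1}$, obtaining $t_0\,g_*(\u) = \u\,t_0^{p^{h-1}}$ and then dividing by the unit $t_0$. The paper streamlines this by simply working modulo $(I_{h-1}, x^{p^{h-1}+1})$, so that $[p]_F(x) \equiv \u x^{p^{h-1}}$ and the $x^{p^h}$ terms never appear; your version keeps them and argues they do not contribute, which amounts to the same thing.

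One remark: the ``main obstacle'' you flag --- that $g_*(\u)$ must lie in $(\u)$ modulo $I_{h-1}$ --- is not actually needed anywhere in the argument. The coefficient of $x^{p^{h-1}}$ on the left is $t_0\,g_*(\u)$ regardless of whether $g_*(\u)$ is divisible by $\u$; nothing in the formal-sum bookkeeping or the identification of lowest-degree terms depends on it. Indeed, the divisibility is a \emph{consequence} of the final formula $g_*(\u) = \u\,t_0^{p^{h-1}-1}$, not an input to it (as you yourself note in your closing parenthetical). So you can simply drop that digression.
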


	\begin{proof} The case \(h=2\) is proved in \cite{lader2013resolution}, so we assume \(h>2\).
		We will expand both sides of \eqref{recursion}, working modulo \(x^{p^{h-1}+1}\). We recall that
		\[[p]_F(x) =\u x^{p^{h-1}} \mod (I_{h-1},x^{p^{h-1}+1}).\]

The left side of \eqref{recursion} is		
		\begin{dmath*}
			h_g([p]_{g_*F}(x)) = t_0 g_*(\u)x^{p^{h-1}}
		\end{dmath*}
and the right side is
\begin{dmath*}
	[p]_F(h_g(x))
	= \u(t_0x)^{p^{h-1}}.
	\end{dmath*}
		Comparing the coefficient of $x^{p^{h-1}}$ completes the proof.
	\end{proof}

	The remainder of this section is devoted to computing explicit, though recursive, formulas for the \(t_i\)'s, \(0\leq i \leq h-1\).
	
	We begin by establishing several lemmas that will be invoked frequently in the computations later in this section.
	
	\begin{lemma}\label{distributor}
		Let \(\Gamma(s,t)\) be a formal group law over a ring \(\Sigma\) in which \(p=0\), let \(d\) be a positive integer, and let \(A,B,C \in \Sigma[[x]]\). If \[(A \underset{\Gamma}+ B )^{p^l} = (A+B)^{p^l} \ \mod (x^d),\]
		then
		
		\[(A \underset{\Gamma}+ B \underset{\Gamma}+ C)^{p^l} = ( (A+B) \underset{\Gamma}+ C)^{p^l} \ \mod (x^d).\]
		
	\end{lemma}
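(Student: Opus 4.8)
The plan is to exploit the fact that the formal group law $\Gamma$, having $p=0$ in its coefficient ring $\Sigma$, satisfies $(a \underset{\Gamma}+ b)^{p^l} = a^{p^l} \underset{\Gamma}+ b^{p^l}$ as an identity of formal power series (the $p^l$-th power map is an endomorphism of the additive structure of $\Gamma$, since $\Gamma(s,t) \equiv s+t+\text{(higher)}$ and raising to $p^l$ in characteristic $p$ is additive on the nose for the polynomial $\Gamma$; more precisely $\Gamma(s,t)^{p^l} = \Gamma_{(l)}(s^{p^l}, t^{p^l})$ where $\Gamma_{(l)}$ is the formal group law obtained by applying Frobenius to the coefficients, but since we only need a congruence modulo $(x^d)$ we can work with the truncated statement). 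First I would record this basic observation carefully, since it is the engine of the whole lemma.

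Next I would apply associativity of $\Gamma$ to write $(A \underset{\Gamma}+ B \underset{\Gamma}+ C)^{p^l} = \bigl((A\underset{\Gamma}+B) \underset{\Gamma}+ C\bigr)^{p^l}$ and then use the additivity-of-$p^l$-th-power identity to get $(A\underset{\Gamma}+B)^{p^l} \underset{\Gamma_{(l)}}+ C^{p^l}$, and similarly $\bigl((A+B)\underset{\Gamma}+C\bigr)^{p^l} = (A+B)^{p^l} \underset{\Gamma_{(l)}}+ C^{p^l}$. Now the hypothesis says $(A\underset{\Gamma}+B)^{p^l} \equiv (A+B)^{p^l} \pmod{(x^d)}$; I would substitute this congruence into the first slot of $\underset{\Gamma_{(l)}}+$ and invoke the fact that a formal group law operation respects congruences modulo an ideal of the power series ring in each variable separately (because $\Gamma_{(l)}(s,t)$ is a power series with no constant term, so replacing $s$ by something congruent to it mod $(x^d)$ changes the output only by something in $(x^d)$). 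This yields $(A\underset{\Gamma}+B)^{p^l} \underset{\Gamma_{(l)}}+ C^{p^l} \equiv (A+B)^{p^l} \underset{\Gamma_{(l)}}+ C^{p^l} \pmod{(x^d)}$, and unwinding the two displayed equalities gives exactly the claimed congruence.

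The main obstacle — really the only subtle point — is justifying the identity $(a \underset{\Gamma}+ b)^{p^l} = a^{p^l} \underset{\Gamma_{(l)}}+ b^{p^l}$ and being careful about which formal group law appears after raising to the $p^l$-th power. One clean way to sidestep any worry about the twisted group law $\Gamma_{(l)}$: instead of tracking it explicitly, observe that all we need is that $u \mapsto u \underset{\Gamma}+ v$ and $u \mapsto (u)^{p^l}$ both send the ideal $(x^d)$ into itself and that $p^l$-th power is additive for $\Gamma$ in the truncated sense, then run the argument as a chain of congruences rather than exact identities. I would also remark that $d$ being a fixed positive integer and $l$ fixed means there is no uniformity issue. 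I expect the write-up to be short; the bookkeeping of "substitute a congruent element into a formal-group-law slot" is the one place to be explicit, and I would state it as a one-line sublemma if it is not already implicit in the paper's conventions.
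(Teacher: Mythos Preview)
Your proposal is correct and follows essentially the same route as the paper: both arguments write $\Gamma(s,t)=s+t+\sum_{i,j>0}a_{ij}s^it^j$, use that in characteristic $p$ the $p^l$-th power distributes over this expression to give $\Gamma(U,V)^{p^l}=U^{p^l}+V^{p^l}+\sum a_{ij}^{p^l}(U^{p^l})^i(V^{p^l})^j$ (what you call $\Gamma_{(l)}(U^{p^l},V^{p^l})$), and then substitute the hypothesis into the first slot. Your write-up is slightly more careful than the paper's in explicitly naming $\Gamma_{(l)}$ and in flagging the ``substitution into a formal-group-law slot respects $(x^d)$'' step, but the underlying argument is identical.
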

	\begin{proof}
		
		The formal group law \(\Gamma\) can be expressed generically as
		\[ \Gamma(s,t) = s+t + \sum_{i,j >0} a_{ij}s^i t^j.\]
		
		By associativity and the fact that \(p=0\), 
		\begin{align*}
			(A \underset{\Gamma}+ B \underset{\Gamma}+ C)^{p^l} &= (A \underset{\Gamma}+ B)^{p^l} + C^{p^l} + \sum_{i,j>0}a_{ij}^{p^l}(A \underset{\Gamma}+ B)^{p^l} C^{p^l}\\
			&\equiv (A + B)^{p^l} + C^{p^l} + \sum_{i,j>0}a_{ij}^{p^l}(A + B)^{p^l} C^{p^l}\\
			&\equiv ((A+B) \underset{\Gamma}+ C)^{p^l}.
\qedhere		\end{align*}
	\end{proof}

	The following technical lemma will be invoked frequently, and is of central importance to the rest of the computations in this section. 
	
	\begin{lemma}\label{slayer}  Let \(R\) be the Lubin-Tate ring, and \(A,B \in R[[x]] \). Assume that \(x^a\) is the highest power of \(x\) dividing \(A\), \(x^b\) the highest power of \(x\) dividing \(B\), and \(a\leq b \leq a p^{h-1}\). For \(F(x,y)\) as in Theorem \ref{F(x,y) computation} and \(l\geq 0\), if 
		
		\[(a(p^{h-1}-1) +b)p^l \geq d,  \]
	then	
		\[(A \F B)^{p^l} = (A+B)^{p^l} \mod (I_{h-1},x^d).\]
		
	\end{lemma}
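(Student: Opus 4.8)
\textbf{Proof proposal for Lemma~\ref{slayer}.}

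The plan is to reduce the statement to Theorem~\ref{F(x,y) computation} by a direct degree-counting argument. Working modulo $I_{h-1}$, recall that Theorem~\ref{F(x,y) computation} expresses $F(x,y) = x + y + \sum_{k} c_k(x,y)$ where each correction term $c_k(x,y)$ is homogeneous of some degree $D_k$ with $p^{h-1} \le D_k \le p^h$ (the lowest being the term $C_{p^{h-1}}(x,y)$ of degree $p^{h-1}$, and all others of degree between $p^{h-1}$ and $p^h$). I would substitute $x \mapsto A$, $y \mapsto B$ into this formula. Since $x^a \mid A$ and $x^b \mid B$, a monomial $A^iB^j$ with $i+j = D_k$ (coming from expanding $c_k(A,B)$, which is homogeneous of degree $D_k$ in its two arguments) is divisible by $x^{ai+bj}$. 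To bound this from below I would use $ai+bj \ge ai + aj \cdot (b/b) $—more carefully, since $b \ge a$ we get $ai + bj \ge a(i+j) = aD_k \ge ap^{h-1}$, but that is not yet enough; the sharper bound comes from noting the extreme case is $i$ as small as possible. Because $c_k$ is built from $(x+y)^{r}(x^{p^{h-1}}+y^{p^{h-1}})^{s}$ type expressions, the minimal $x$-power in $c_k(A,B)$ is achieved by taking the $y$-heavy part; the key inequality to extract is that every monomial of $A \F B - (A+B)$ is divisible by $x^{N}$ with $N \ge a(p^{h-1}-1) + b$. I will verify this term by term against the explicit formula: for the degree-$p^{h-1}$ term $C_{p^{h-1}}$, the lowest-order contribution after deleting $A^{p^{h-1}}+B^{p^{h-1}}$ is $A^{p^{h-1}-1}B$ (divisible by $x^{a(p^{h-1}-1)+b}$, using $b \le ap^{h-1}$ to ensure this is indeed the minimum), and similarly for the higher-degree terms one checks the lowest surviving monomial has $x$-valuation at least $a(p^{h-1}-1)+b$.

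Once that claim is in hand, the lemma follows quickly: $A \F B = (A+B) + E$ where $x^{a(p^{h-1}-1)+b} \mid E$ modulo $I_{h-1}$. Then
\[
(A \F B)^{p^l} = \bigl((A+B) + E\bigr)^{p^l} \equiv (A+B)^{p^l} + E^{p^l} \pmod{I_{h-1}},
\]
using that the binomial coefficients $\binom{p^l}{k}$ for $0 < k < p^l$ are divisible by $p$, hence vanish mod $I_{h-1}$ (since $p \in I_{h-1}$). Finally $E^{p^l}$ is divisible by $x^{(a(p^{h-1}-1)+b)p^l}$, which is $\ge x^d$ by hypothesis, so $E^{p^l} \equiv 0 \pmod{(I_{h-1}, x^d)}$ and the two sides agree mod $(I_{h-1}, x^d)$.

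The main obstacle I anticipate is the bookkeeping in the degree-counting claim: verifying that in each homogeneous piece of $F(x,y) - x - y$ given by Theorem~\ref{F(x,y) computation}, after substituting $A$ and $B$ and cancelling the "diagonal" part that reassembles into $A + B$, every remaining monomial is divisible by $x^{a(p^{h-1}-1)+b}$. The constraint $b \le ap^{h-1}$ is precisely what guarantees that the monomial $A^{p^{h-1}-1}B$ (and its analogues) rather than some other monomial realizes the minimal $x$-valuation, so I would be careful to invoke it at exactly the right point. It may be cleanest to first prove the statement for the single formal group law operation $A \F B$ (two summands) directly from the formula, and to keep Lemma~\ref{distributor} in reserve in case an inductive reduction from three or more summands is needed elsewhere; for the two-summand statement as written, the explicit formula in Theorem~\ref{F(x,y) computation} should suffice on its own.
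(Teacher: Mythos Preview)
Your approach is correct and is essentially the paper's own proof: expand $F(A,B)$ via Theorem~\ref{F(x,y) computation}, show every correction term has $x$-valuation at least $a(p^{h-1}-1)+b$ modulo $I_{h-1}$, then apply the Frobenius $((A+B)+E)^{p^l}\equiv (A+B)^{p^l}+E^{p^l}$ to finish; the paper merely organizes it the other way around, distributing the $p^l$-th power first and then bounding each summand.

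One bookkeeping correction, since you flagged exactly this as your worry. The hypothesis $b\le ap^{h-1}$ is \emph{not} needed for the $C_{p^{h-1}}$ term: there the pure powers $A^{p^{h-1}}$ and $B^{p^{h-1}}$ are already subtracted off, so every surviving monomial $A^iB^j$ has $i,j\ge 1$, and with $a\le b$ the minimum of $ai+bj$ over $i+j=p^{h-1}$ is $a(p^{h-1}-1)+b$ automatically. Where $b\le ap^{h-1}$ is genuinely used is in the $P_m(A,B)$ terms: the $j=m$ summand $(A+B)^{p^{h-1}(m+1)-m}$ contributes a pure $A^{p^{h-1}(m+1)-m}$ monomial with no factor of $B$, so the minimal $x$-valuation there is only $a(2p^{h-1}-1)$ (at $m=1$), and $a(2p^{h-1}-1)\ge a(p^{h-1}-1)+b$ is equivalent to $b\le ap^{h-1}$. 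Relatedly, your ``$y$-heavy'' remark is reversed --- since $a\le b$, the minimum valuation is achieved at the $A$-heavy monomials.
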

	
	\begin{proof} By Theorem \ref{F(x,y) computation}, and the fact that we are working mod \((p)\), 
		\begin{dmath*}
			(A\F B)^{p^l} \equiv A^{p^l} +B^{p^l} +\left(-\frac{u_{h-1}}{1-p^{p^{h-1}-1}}C_{p^{h-1}}(A,B)\right)^{p^l} +\left(- \frac{1}{1-p^{p^h-1}}C_{p^h}(A,B)\right)^{p^l} +\left(\sum_{m=1}^{p-1}\u^{m+1}P_{m}(A,B)\right)^{p^l}.
		\end{dmath*}
	
	We will analyze each of the terms in the above expansion of \(F(x,y)\), starting with \(C_{p^n}(A,B)\).
	
			By definition,
		\[C_{p^n}(A,B) = \frac{1}{p}((A+B)^{p^n}-A^{p^n} - B^{p^n}).\] 
		Therefore, the highest power of \(x\) that divides	\(C_{p^{n}}(A,B)^{p^l}\) is 
		\[(a(p^{n}-1)  +b)p^l. \]	
		Taking \(n= p^{h-1}\), the hypotheses imply that both \( C_{p^{h-1}}(A,B)^{p^l}\) and \(C_{p^h}(A,B)^{p^l}\) vanish modulo \((x^d)\).

		Next, we recall that
		\begin{align*}P_{m}(A,B) =  \frac{1}{(p-p^{p^{h-1}})^{m+1}} \sum_{j=0}^{m} \frac{(-1)^{j+1}}{j+1}&{{p^{h-1}(j+1)} \choose j} {{j(p^{h-1}-1)+p^{h-1}} \choose {m-j}}\\
			&(A+B)^{p^{h-1}(j+1)-m}(A^{p^{h-1}} + B^{p^{h-1}})^{m-j}
		\end{align*}
		with \(m \geq1\). The highest power of \(x\)  dividing \(P_m(A,B)^{p^l}\) is
		
		\begin{dmath*} (a(p^{h-1}(j+1) -m) + a p^{h-1}(m-j))p^l =  (ap^{h-1} + m(ap^{h-1}-a))p^l
			\geq (2ap^{h-1} -a)p^l. 
		\end{dmath*}
		
		Finally, we note that
		\[	(a(p^{h-1}-1) + b)p^l \leq (2ap^{h-1} -a)p^l\] 
		if, and only if
		\[b\leq ap^{h-1}.\]
		This completes the proof.
		\end{proof}
	
	\begin{note}\label{gslayer} Since the monomial degrees of \(g_*F(x,y)\) are the same as those of \(F(x,y)\), under the hypotheses of Lemma \ref{slayer},
		\[(A \gF B)^{p^l} = (A+B)^{p^l} \mod (I_{h-1},x^d)   \]
		for any \( g \in \SSS\).
	\end{note}
	
We are now prepared for the first of two main computations in this section. The proof is inspired by \cite{lader2013resolution}.

	\begin{restatable}{theorem}{tk}\label{tk} For \(h\geq 2\), \(g\in \SSS\) as above, and \(0\leq k <h-1\), 
	\[t_k = t_k^{p^h}+\u t_{k+1}^{p^{h-1}}-\u^{p^{k+1}}t_{k+1}t_0^{p^{k+1}(p^{h-1}-1)} \mod I_{h-1}.\]
\end{restatable}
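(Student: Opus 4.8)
The plan is to expand both sides of the fundamental identity \eqref{recursion}, $h_g([p]_{g_*F}(x))=[p]_F(h_g(x))$, modulo $I_{h-1}$ and modulo a sufficiently high power of $x$, and then compare the coefficients of $x^{p^{k+h}}$; since $0\le k\le h-2$ it suffices to work modulo $(I_{h-1},x^{p^{2h-2}+1})$ throughout. First I would reduce the two $p$-series. Because $I_{h-1}$ is invariant and contains $p,u_1,\dots,u_{h-2}$, we have $g_*(u_i)\equiv 0 \mod I_{h-1}$ for $i\le h-2$, so modulo $I_{h-1}$,
\[[p]_F(x)\equiv \u x^{p^{h-1}}\F x^{p^h}\qquad\text{and}\qquad [p]_{g_*F}(x)\equiv g_*(\u)x^{p^{h-1}}\gF x^{p^h}.\]
Here Lemma \ref{slayer} and Note \ref{gslayer} apply with $a=p^{h-1}$, $b=p^h$, $l=0$: the hypothesis $b\le ap^{h-1}$ is exactly $h\ge2$, and $a(p^{h-1}-1)+b=p^{2h-2}+p^{h-1}(p-1)>p^{2h-2}$, so both $\F$-sums may be replaced by ordinary sums modulo $(I_{h-1},x^{p^{2h-2}+1})$.

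For the right-hand side, write $h_g(x)={\sum_i}^{F}t_ix^{p^i}$. Terms with $i\ge h$ contribute nothing modulo $x^{p^{2h-2}+1}$ after raising to the $p^{h-1}$st or $p^h$th power, and for the remaining finitely many terms repeated use of Lemmas \ref{distributor} and \ref{slayer}, together with the identity $(a+b)^{p^r}=a^{p^r}+b^{p^r}$ valid modulo $p$, yields
\[h_g(x)^{p^{h-1}}\equiv{\sum_i} t_i^{p^{h-1}}x^{p^{i+h-1}},\qquad h_g(x)^{p^h}\equiv{\sum_i} t_i^{p^h}x^{p^{i+h}}\mod (I_{h-1},x^{p^{2h-2}+1}).\]
One more application of Lemma \ref{slayer} turns the $\F$ in $[p]_F(h_g(x))=\u\,h_g(x)^{p^{h-1}}\F h_g(x)^{p^h}$ into an ordinary sum, so the coefficient of $x^{p^{k+h}}$ on the right-hand side is $t_k^{p^h}+\u\,t_{k+1}^{p^{h-1}}$.

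For the left-hand side, put $Y:=[p]_{g_*F}(x)\equiv g_*(\u)x^{p^{h-1}}+x^{p^h}$, so that $Y^{p^i}\equiv g_*(\u)^{p^i}x^{p^{i+h-1}}+x^{p^{i+h}}$ by the same identity. Exactly as above, $h_g(Y)={\sum_i}^{F}t_iY^{p^i}\equiv{\sum_i}t_iY^{p^i}\mod (I_{h-1},x^{p^{2h-2}+1})$, and extracting the coefficient of $x^{p^{k+h}}$ gives $t_k+t_{k+1}g_*(\u)^{p^{k+1}}$. Now Theorem \ref{action} gives $g_*(\u)\equiv\u\,t_0^{p^{h-1}-1}\mod I_{h-1}$, hence $g_*(\u)^{p^{k+1}}\equiv\u^{p^{k+1}}t_0^{p^{k+1}(p^{h-1}-1)}$, and the left-hand coefficient becomes $t_k+\u^{p^{k+1}}t_{k+1}t_0^{p^{k+1}(p^{h-1}-1)}$. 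Equating the two coefficients modulo $I_{h-1}$ and solving for $t_k$ produces the asserted recursion.

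The main obstacle is purely the bookkeeping of powers of $p$: in each appeal to Lemma \ref{slayer} one must verify that $b\le ap^{h-1}$ (this is where $h\ge2$, and the inequalities $p^{j+1}\le p^{h-1}$ for $j\le h-2$, enter) and that $(a(p^{h-1}-1)+b)p^{l}\ge p^{2h-2}+1$, and one must confirm that the correction terms $C_{p^{h-1}}$, $C_{p^h}$ and $P_m$ of Theorem \ref{F(x,y) computation}, evaluated at the relevant series, all have $x$-order strictly larger than $p^{2h-2}$ and so may be discarded. (The case $h=2$, where $I_{h-1}=(p)$, is already in \cite{lader2013resolution}; the argument above also covers it, the iterated replacements then being essentially vacuous.)
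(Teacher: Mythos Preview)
Your proposal is correct and follows essentially the same approach as the paper: expand both sides of \eqref{recursion} using Lemmas \ref{distributor} and \ref{slayer} (and Note \ref{gslayer}) to convert all the formal sums to ordinary sums, then read off the coefficient of $x^{p^{h+k}}$ and invoke Theorem \ref{action}. The only cosmetic difference is that the paper fixes $k$ and truncates at $x^{p^{h+k}+1}$, whereas you truncate uniformly at $x^{p^{2h-2}+1}$; since $p^{h+k}\le p^{2h-2}$ for $k\le h-2$, and all of the Lemma \ref{slayer} inequalities you need are already satisfied at $d=p^{2h-2}+1$, this makes no difference.
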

	\begin{proof} The case \(h=2\) was computed by Lader, so we assume \(h>2\).
		We fix an integer \(k\) with \(0\leq k < h-1 \).	We will compute the coefficient of \(x^{p^{h+k}}\) on both sides of \eqref{recursion}. Throughout the proof, all equalities are understood to be modulo (\(I_{h-1},x^{p^{h+k}+1})\). 
		
		We begin with the left-hand side of \eqref{recursion}. We apply Lemma \ref{distributor} and Lemma \ref{slayer} with \(d=p^{h+k}+1\) to obtain
		\begin{dmath*}
			h_g([p]_{g_*F}(x)) = t_0(g_*(\u)x^{p^{h-1}} \gF x^{p^h})
			\F t_1(g_*(\u)x^{p^{h-1}} \gF x^{p^h})^p
			\F \cdots \F
			t_k(g_*(\u)x^{p^{h-1}} \gF x^{p^h})^{p^k} 
			\F t_{k+1}(g_*(\u)x^{p^{h-1}} \gF x^{p^h})^{p^{k+1}}
			 = t_0(g_*(\u)x^{p^{h-1}} + x^{p^h})
			\F t_1(g_*(\u)x^{p^{h-1}} + x^{p^h})^p
			\F \cdots \F
			t_k(g_*(\u)x^{p^{h-1}} + x^{p^h})^{p^k} 
			\F t_{k+1}(g_*(\u)x^{p^{h-1}} + x^{p^h})^{p^{k+1}}
			= t_0(g_*(\u)x^{p^{h-1}} + x^{p^h})
			+ t_1(g_*(\u)x^{p^{h-1}} + x^{p^h})^p
			+ \cdots +
			t_k(g_*(\u)x^{p^{h-1}} + x^{p^h})^{p^k} 
			+ t_{k+1}(g_*(\u)x^{p^{h-1}} + x^{p^h})^{p^{k+1}}.
		\end{dmath*} 
		By Theorem \ref{action}, \(g_*(\u) = \u t_0^{p^{h-1}-1}\). Hence the coefficient of \(x^{p^{h+k}}\) on the left-hand side of \eqref{recursion} is
		\[t_k+ t_{k+1}\u^{p^{k+1}}t_0^{p^{k+1}(p^{h-1}-1)}.\]
		
	We now turn to the right-hand side of \eqref{recursion}. We simplify using Lemmas \ref{slayer} and \ref{distributor} to obtain
		\begin{dmath*}
			[p]_F(h_g(x)) = \u (t_0x \F t_1x^p \F \cdots \F t_kx^{p^k} \F t_{k+1}x^{p^{k+1}})^{p^{h-1}} \F (t_0x \F t_1x^p \F \cdots \F t_{k-1}x^{p^{k-1}} \F t_{k}x^{p^{k}})^{p^{h}}
	= \u (t_0x + t_1x^p + \cdots + t_kx^{p^k} + t_{k+1}x^{p^{k+1}})^{p^{h-1}} \F (t_0x + t_1x^p + \cdots + t_{k-1}x^{p^{k-1}} + t_{k}x^{p^{k}})^{p^{h}}
			= \u (t_0x + t_1x^p + \cdots + t_kx^{p^k} + t_{k+1}x^{p^{k+1}})^{p^{h-1}} + (t_0x + t_1x^p + \cdots + t_{k-1}x^{p^{k-1}} + t_{k}x^{p^{k}})^{p^{h}}.
		\end{dmath*}
		
		The coefficient of \(x^{p^{h+k}}\) is
		\[\u t_{k+1}^{p^{h-1}} + t_k^{p^h}.\]
		
		We equate coefficients to complete the proof.	
	\end{proof}

The case \(k=h-1\) is significantly more difficult, and is the last result in this section. The following lemma will be used frequently in that computation.

\begin{lemma}\label{Cpn} For any \(n\geq 1\)
\[C_{p^n}(x,y) = C_p(x^{p^{n-1}},y^{p^{n-1}}) \mod (p).\]	
	\end{lemma}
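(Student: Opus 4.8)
The plan is to unwind the definition of $C_{p^n}$ and repeatedly use the Frobenius identity $(a+b)^p \equiv a^p + b^p \pmod p$. Recall that by definition
\[
C_{p^n}(x,y) = \frac{1}{p}\bigl((x+y)^{p^n} - x^{p^n} - y^{p^n}\bigr),
\]
so the subtlety is that $C_{p^n}$ is an \emph{integral} polynomial even though the displayed formula has a $p$ in the denominator; the claimed congruence is an equality of elements of $\FF_p[x,y]$ (or of $R[[x,y]]/(p)$), not a naive consequence of the mod-$p$ reductions of the numerators, which all vanish.

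First I would set up the key elementary fact: for integers $a,b$ and any $k\geq 1$,
\[
(a+b)^{p^k} = a^{p^k} + b^{p^k} + p\,Q_k(a,b)
\]
for a unique polynomial $Q_k \in \ZZ[a,b]$, and moreover $Q_k(a,b) \equiv Q_1(a^{p^{k-1}},b^{p^{k-1}}) \pmod p$. To see the second part, iterate: writing $a' = a^{p^{k-1}}$, $b' = b^{p^{k-1}}$, we have $(a+b)^{p^{k}} = \bigl((a+b)^{p^{k-1}}\bigr)^p = \bigl(a' + b' + pQ_{k-1}\bigr)^p$, and expanding by the binomial theorem every term involving $pQ_{k-1}$ with multiplicity $\geq 1$ contributes a factor of $p$, while the binomial coefficients $\binom{p}{i}$ for $1\leq i\leq p-1$ are themselves divisible by $p$; so modulo $p^2$ one gets $(a+b)^{p^k} \equiv (a')^p + (b')^p \pmod{p^2}$ plus the cross terms, and dividing the ``defect'' by $p$ shows $Q_k(a,b) \equiv \frac1p\bigl((a'+b')^p - (a')^p - (b')^p\bigr) = Q_1(a',b') \pmod p$. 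Specializing $a = x$, $b = y$, $k = n$, and noting $Q_n(x,y) = \frac1p\bigl((x+y)^{p^n} - x^{p^n} - y^{p^n}\bigr) = C_{p^n}(x,y)$ while $Q_1(x^{p^{n-1}},y^{p^{n-1}}) = C_p(x^{p^{n-1}},y^{p^{n-1}})$, the lemma follows.

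Alternatively, and perhaps cleaner to write, I would argue directly: over $\FF_p$ we have $(x+y)^{p^{n}} = \bigl((x+y)^{p^{n-1}}\bigr)^p = (x^{p^{n-1}} + y^{p^{n-1}})^p$ as polynomials, since raising to the $p$-th power is the Frobenius ring endomorphism on $\FF_p[x,y]$. But this identity does not immediately descend to a statement about the integral polynomial $C_{p^n}$; one still needs the mod-$p^2$ refinement above to control the division by $p$. I would therefore present the $Q_k$ argument as the main line. The only mild obstacle is bookkeeping the powers of $p$ carefully — making sure that ``$(a'+b'+pQ_{k-1})^p \equiv (a')^p + (b')^p \pmod{p^2}$'' is justified, which uses both $p \mid \binom{p}{i}$ for $0 < i < p$ and $p^2 \mid p^i$ for $i \geq 2$ — but this is entirely routine, and the induction on $k$ closes immediately once the base case $k=1$ (which is trivial) is in hand.
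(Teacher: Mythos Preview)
Your proposal is correct and follows essentially the same approach as the paper: write $(x+y)^{p^{n-1}} = x^{p^{n-1}} + y^{p^{n-1}} + pX$, raise to the $p$-th power, expand binomially, and check that every cross term is divisible by $p^2$ so that after dividing by $p$ one is left with $C_p(x^{p^{n-1}},y^{p^{n-1}})$ modulo $p$. The induction framing you propose is unnecessary---the paper argues directly for a fixed $n$ and never uses any inductive hypothesis on $X = Q_{n-1}$---but it is harmless.
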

\begin{proof}
The claim is trivial for \(n=1\), so we assume \(n \geq 2\).	
We first observe that	
	\[(x+y)^{p^{n-1}} = x^{p^{n-1}} + pX + y^{p^{n-1}}\]
	where \(X\) is the remainder modulo \((p)\). Then
	\begin{dmath*}
	C_{p^n}(x,y) = \frac{1}{p} \left(  (x+y)^{p^n} - x^{p^n} - y^{p^n}   \right)
	= 	\frac{1}{p} \left(  \left(  x^{p^{n-1}} + pX + y^{p^{n-1}}  \right)^{p} - x^{p^n} - y^{p^n}   \right)
	= 	\frac{1}{p} \left( \sum_{i=0}^p {p \choose i} (pX)^i (x^{p^{n-1}} + y^{p^{n-1}})^{p-i}    - x^{p^n} - y^{p^n}   \right)
	=  \sum_{i=1}^p \frac{1}{p}{p \choose i} (pX)^i (x^{p^{n-1}} + y^{p^{n-1}})^{p-i}  + \frac{1}{p}\left((x^{p^{n-1}} + y^{p^{n-1}})^p    - x^{p^n} - y^{p^n}   \right).
		\end{dmath*}
	We evaluate modulo \((p)\) to arrive at 
	\begin{align*}
			C_{p^n}(x,y) &= \frac{1}{p}\left((x^{p^{n-1}} + y^{p^{n-1}})^p    - x^{p^n} - y^{p^n}   \right) \\
			&= C_p(x^{p^{n-1}},y^{p^{n-1}} ).
	 \qedhere \end{align*} 
	\end{proof}

We have split the proof of Theorem \ref{th} into the following two lemmas.

\begin{lemma}\label{rhs} Assume \(h>2\). Modulo \((p,u_1,...,u_{h-2},\u^{p^{h-1}-1})\), the coefficient of \(x^{p^{2h-1}}\) on the right-hand side of \eqref{recursion} is equal to
	\begin{dmath*}t_{h-1}^{p^{h}} + \u t_h^{p^{h-1}} - \sum_{j=1}^{p-1}\frac{1}{p}{p \choose j}\u^{jp^{h-2}+1}t_1^{jp^{2h-3}}t_0^{(p-j)p^{2h-2}}.
		\end{dmath*}
	\end{lemma}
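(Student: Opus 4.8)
The plan is to expand the right-hand side of \eqref{recursion}, namely \([p]_F(h_g(x))\), and extract the coefficient of \(x^{p^{2h-1}}\) working modulo \((p,u_1,\dots,u_{h-2},\u^{p^{h-1}-1})\). Recall that modulo \(I_{h-1}\) we have \([p]_F(z) = \u z^{p^{h-1}} \F z^{p^h}\), and \(h_g(x) = \sum_i^F t_i x^{p^i}\). So the right-hand side is
\[
\u\Bigl({\textstyle\sum_i^F} t_i x^{p^i}\Bigr)^{p^{h-1}} \F \Bigl({\textstyle\sum_i^F} t_i x^{p^i}\Bigr)^{p^h}.
\]
Since we only care about the coefficient of \(x^{p^{2h-1}}\), I would first truncate the formal sums \(\sum^F t_i x^{p^i}\) at an appropriate index (terms \(t_i x^{p^i}\) with \(i\) large enough contribute only in degrees above \(p^{2h-1}\) after raising to \(p^{h-1}\) or \(p^h\)), then apply Lemma \ref{distributor} together with Note \ref{gslayer}/Lemma \ref{slayer} repeatedly to replace the \(F\)-sums inside the \(p^{h-1}\)- and \(p^h\)-th powers by ordinary sums, modulo \(x^{p^{2h-1}+1}\). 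The hypothesis \(a \le b \le ap^{h-1}\) in Lemma \ref{slayer} needs to be checked at each application, but this is routine bookkeeping on the exponents of the terms being combined.

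After this reduction, the right-hand side becomes
\[
\u\Bigl(\textstyle\sum_i t_i x^{p^i}\Bigr)^{p^{h-1}} \F \Bigl(\sum_i t_i x^{p^i}\Bigr)^{p^h} \pmod{(I_{h-1},x^{p^{2h-1}+1})},
\]
and I would apply Lemma \ref{slayer}/Note \ref{gslayer} one final time to replace the outer \(\F\) by \(+\) — except that here the second summand has lowest degree \(p^h\) and the first has lowest degree \(p^{2h-1}\) after the twist by \(\u\), so the hypothesis \(b \le ap^{h-1}\) fails and the \(F\)-addition does \emph{not} simplify to ordinary addition. This is the source of the extra sum \(\sum_{j=1}^{p-1}\) in the statement, and handling it correctly is the main obstacle. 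Concretely, one must compute \(A \F B\) where \(A = \u(\sum t_i x^{p^i})^{p^{h-1}}\) has leading term \(\u t_0^{p^{h-1}} x^{p^{2h-1}}\) and \(B = (\sum t_i x^{p^i})^{p^h}\) has leading term \(t_0^{p^h} x^{p^{2h}}\), but we only want the \(x^{p^{2h-1}}\) coefficient — so in fact only the term \(A\) contributes to that degree from the "genuinely nonlinear" part, unless \(B\) itself has a term of degree exactly \(p^{2h-1}\). Since \(B = (\sum t_i x^{p^i})^{p^h}\) and \(p^h \mid\) every exponent appearing, the degrees in \(B\) are \(p^h \cdot (\text{sums of } p^i\text{'s})\); one checks which of these equal \(p^{2h-1}\), i.e. which combinations \(\sum p^i = p^{h-1}\), giving contributions \(t_0^{a_0} t_1^{a_1}\cdots\) with \(\sum a_i p^i = p^{h-1}\). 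The relevant one is governed by the \(p\)-ary expansion, and expanding \((t_0 x + t_1 x^p + \cdots)^{p^h}\) via the multinomial theorem modulo \(p\), the coefficient of \(x^{p^{2h-1}}\) picks up precisely the terms indexed by \(j\), where \(j\) copies of the \(t_1 x^p\)-type term (at the shifted level \(t_1 x^{p^{h-1}}\) inside the \(\F\)-structure) combine with \(p-j\) copies of \(t_0\).

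More carefully, I expect the extra terms arise as follows: after reducing, the coefficient of \(x^{p^{2h-1}}\) in \(\u(\sum t_i x^{p^i})^{p^{h-1}}\) is \(\u t_h^{p^{h-1}}\) (from \(i=h\)), and the coefficient in \((\sum t_i x^{p^i})^{p^h}\) is \(t_{h-1}^{p^h}\) (from \(i=h-1\)), so naive addition gives \(t_{h-1}^{p^h} + \u t_h^{p^{h-1}}\). The correction comes from the cross-terms in \((A\F B)\) with \(A, B\) as above: using Theorem \ref{F(x,y) computation}, \(A\F B = A + B + (\text{terms involving } C_{p^{h-1}}, C_{p^h}, P_m)\), and one must identify which of these correction terms land in degree \(p^{2h-1}\). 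The term \(-pL_{h-1}C_{p^{h-1}}(A,B)\) with \(A\) of degree \(p^{2h-1}\), \(B\) of degree \(p^h\): \(C_{p^{h-1}}(A,B) = \frac1p((A+B)^{p^{h-1}} - A^{p^{h-1}} - B^{p^{h-1}})\), and the lowest-degree cross term is of degree \(\approx p^h \cdot p^{h-2} \cdot (\text{stuff})\); using Lemma \ref{Cpn}, \(C_{p^{h-1}} \equiv C_p(\,\cdot^{p^{h-2}}, \cdot^{p^{h-2}})\), so the mixed terms in \(C_p(A^{p^{h-2}}, B^{p^{h-2}}) = \frac1p\sum_{j=1}^{p-1}\binom pj A^{jp^{h-2}}B^{(p-j)p^{h-2}}\) have degree \(j p^{h-2}\cdot\deg A' + \cdots\) — matching \(p^{2h-1}\) precisely when \(B\)'s contributing factor is the leading term \(t_0^{p^h}x^{p^{2h}}\)-type piece, but that's degree \(p^{2h}\), too big; instead \(B\) must contribute its \(t_1\)-level term. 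Plugging \(A \sim \u t_0^{p^{h-1}}x^{p^{2h-1}}\) (degree \(p^{2h-1}\)) into \(A^{jp^{h-2}}\): this alone has degree \(jp^{h-2}\cdot p^{2h-1}\), already too big for \(j\ge1\); so in fact \(A\) must contribute its \emph{lowest} factor and \(B\) the rest — but \(A\)'s lowest degree is \(p^{2h-1}\) itself. Reconciling these degree constraints is exactly the delicate part: I anticipate that the factor of \(\u\) multiplying \(A\), combined with the hypothesis-weakened modulus \(\u^{p^{h-1}-1}\), is what makes room for the \(j\)-indexed terms \(\u^{jp^{h-2}+1}t_1^{jp^{2h-3}}t_0^{(p-j)p^{2h-2}}\) to appear with coefficient \(\frac1p\binom pj\), via the term \(-pL_{h-1}C_{p^{h-1}}(A',B')\) of Theorem \ref{F(x,y) computation} where now \(A' = (\sum t_i x^{p^i})^{p^{h-1}}\) (degree \(p^{2h-2}\), not \(p^{2h-1}\)!) before the \(\u\)-twist and \(B' = (\sum t_i x^{p^i})^{p^h}\). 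Thus I would redo the final \(\F\)-combination as \(\u \cdot X \F Y\) where \(X = (\sum t_i x^{p^i})^{p^{h-1}}\), \(Y = (\sum t_i x^{p^i})^{p^h}\) — here the \(\u\) is a scalar and does \emph{not} distribute into the \(\F\), so one genuinely has \(\u X \F Y\), expand by Theorem \ref{F(x,y) computation} with \(A = \u X\), \(B = Y\), use Lemma \ref{Cpn} to rewrite \(C_{p^{h-1}}(\u X, Y)\), and read off the degree-\(p^{2h-1}\) coefficient: \(X\) contributes \(x^{p^{2h-2}}\)-level terms (so \((\u X)^{p^{h-2}}\) contributes degree \(\u^{p^{h-2}}p^{2h-2}\cdot(\text{stuff})\)), matched against \(Y\)'s \(t_1\)-level term \(t_1^{p^h} x^{p^{h+1}}\); tracking exponents of \(x\): need \(j p^{h-2}\cdot(\text{deg in }X) + (p-j)p^{h-2}\cdot(\text{deg in }Y) = p^{2h-1}\), and with \(X\)'s term \(t_1^{p^{h-1}}x^{p^{h-1}\cdot p} = t_1^{p^{h-1}}x^{p^h}\) (so deg \(p^h\)) and \(Y\)'s term \(t_0^{p^h}x^{p^h}\) (deg \(p^h\)) — then \(j p^{h-2}\cdot p^h + (p-j)p^{h-2}\cdot p^h = p\cdot p^{h-2}\cdot p^h = p^{2h-1}\). ✓. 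The \(\u\)-power is then \(1\) (from the twist on \(X\)) times \(\u^{jp^{h-2}\cdot 0}\)... hmm, the twist \(\u X\) raised to \(jp^{h-2}\) gives \(\u^{jp^{h-2}}\), plus the overall \(\u\) from \(-u_{h-1}/(1-p^{p^{h-1}-1})\cdot C\) — wait, \(-pL_{h-1} = -u_{h-1}/(1-p^{p^{h-1}-1}) \equiv -\u \pmod p\) — giving total \(\u^{jp^{h-2}+1}\). ✓. And the \(t\)-exponents: \(t_1^{jp^{h-2}\cdot p^{h-1}} = t_1^{jp^{2h-3}}\) ✓ and \(t_0^{(p-j)p^{h-2}\cdot p^h} = t_0^{(p-j)p^{2h-2}}\) ✓, with binomial coefficient \(-(-\u)\cdot\frac1p\binom pj \cdot(-1)^{?}\) — the signs must be tracked to land on \(-\frac1p\binom pj\). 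So the main obstacle is the careful degree-and-coefficient bookkeeping in this final non-distributing \(\F\)-combination; everything else is mechanical application of Lemmas \ref{distributor}, \ref{slayer}, \ref{Cpn} and Theorem \ref{F(x,y) computation}.
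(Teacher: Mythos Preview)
Your approach is essentially the paper's: write the right-hand side as \(A \F B\) with \(A = \u\bigl(\sum^F t_i x^{p^i}\bigr)^{p^{h-1}}\) and \(B = \bigl(\sum^F t_i x^{p^i}\bigr)^{p^h}\), simplify the inner \(\F\)-sums via Lemmas \ref{distributor} and \ref{slayer}, then expand the outer \(\F\) using Theorem \ref{F(x,y) computation} and Lemma \ref{Cpn}, so that \(-\u C_{p^{h-1}}(A,B)\) yields the \(j\)-indexed correction while \(A+B\) contributes \(t_{h-1}^{p^h} + \u t_h^{p^{h-1}}\). Your final coefficient computation for the \(C_{p^{h-1}}\) term is correct and matches the paper.

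Two steps you dismiss as routine are where most of the paper's effort goes, and one of them does not work as you state it. First, in the outer expansion you must verify that \(C_{p^h}(A,B)\) and all \(P_m(A,B)\) vanish in degree \(p^{2h-1}\); the \(P_m\) case is not automatic and requires a case split (\(m=j\) versus \(m\neq j\)) that uses the \(\u^{p^{h-1}+1}\) modulus in an essential way. Second, Lemma \ref{slayer} does \emph{not} allow you to replace the innermost \(\F\) in \(A\) by an ordinary sum: with \(a=1\), \(b=p\), \(l=h-1\) the bound is \((p^{h-1}-1+p)p^{h-1} = p^{2h-2}+p^h-p^{h-1} < p^{2h-1}+1\). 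The paper can only reduce \(A\) to \(\u\bigl(t_0x \F [t_1x^p+\cdots+t_hx^{p^h}]\bigr)^{p^{h-1}}\), and must then separately expand this residual \(\F\) via Theorem \ref{F(x,y) computation} and check that its own \(C_{p^{h-1}}\), \(C_{p^h}\), and \(P_m\) pieces miss degree \(p^{2h-1}\) (so that the coefficient of \(x^{p^{2h-1}}\) in \(A\) really is \(\u t_h^{p^{h-1}}\)). Finally, your intermediate degree claims are garbled: the lowest-degree term of \(A\) is \(\u t_0^{p^{h-1}}x^{p^{h-1}}\), not degree \(p^{2h-1}\), and similarly \(B\) starts in degree \(p^h\); you arrive at the right picture by the end, but that passage should be rewritten.
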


	\begin{proof} Throughout this proof, all equalities are understood to be modulo \((x^{p^{2h-1}+1})\).
		
	The right-hand side is given by
		\begin{dmath*} 
			[p]_{F}(h_g(x)) = \u(h_g(x))^{p^{h-1}} \F (h_g(x))^{p^h}
			=\u(t_0x \F t_1x^p \F ... \F t_{h-1}x^{p^{h-1}} \F t_hx^{p^h})^{p^{h-1}} \F (t_0x \F t_1x^p \F ... \F t_{h-2}x^{p^{h-2}} \F t_{h-1}x^{p^{h-1}})^{p^h}.
		\end{dmath*}
		
		Let
		\[A= \u(t_0x \F t_1x^p \F ... \F t_{h-1}x^{p^{h-1}} \F t_hx^{p^h})^{p^{h-1}}  \]
		\[B= (t_0x \F t_1x^p \F ... \F t_{h-2}x^{p^{h-2}} \F t_{h-1}x^{p^{h-1}})^{p^h}.\]
		
By Lemmas \ref{distributor} and \ref{slayer},
		
		\[A= \u(t_0x \F [t_1x^p + ... + t_{h-1}x^{p^{h-1}} + t_hx^{p^h}])^{p^{h-1}} \mod (x^{p^{2h-1}+1}) \]
		and
		\[B= (t_0x + t_1x^p + ... + t_{h-2}x^{p^{h-2}} + t_{h-1}x^{p^{h-1}})^{p^h} \mod (x^{p^{2h-1}+1}).\]
		
		We claim that, modulo \((x^{p^{2h-1}+1})\),
		\[A\F B = A+B - \u C_{p^{h-1}}(A,B). \]
		
	We first note that the smallest degree term in \(C_{p^{h}}(A,B)\)  has degree
	\[p^{h-1}(p^h-1)+p^h > p^{2h-1}.\]
		
	Thus, to prove the claim, it suffices to show \[P_m(A,B) \equiv 0.\] Ignoring all integer coefficients, we need to analyze
		\[\sum_{m=1}^{p-1} \u^{m+1} \sum_{j=0}^{m}(A+B)^{p^{h-1}(j+1)-m} (A^{p^{h-1}} + B^{p^{h-1}})^{m-j}.\]
		
		The smallest degree term appearing in \(B^{p^{h-1}}\) is \(p^{2h-1}\). Since the exponent on \((A+B)\) is positive,
		all terms involving \(B^{p^{h-1}}\) vanish modulo \((x^{p^{2h-1}+1})\). We are left with
		\begin{dmath*}
			\sum_{m=1}^{p-1} \u^{m+1} \sum_{j=0}^{m}(A+B)^{p^{h-1}(j+1)-m} A^{p^{h-1}(m-j)}
			=	\sum_{m=1}^{p-1} \u^{m+1} \sum_{j=0}^{m}\sum_{i=0}^{p^{h-1}(j+1)-m}
			{{p^{h-1}(j+1)-m} \choose {i}} A^{p^{h-1}(m-j)+i} B^{p^{h-1}(j+1)-m-i}.
		\end{dmath*}

		Since \(\u\) divides \(A\), the power of \(\u\) appearing in this sum is
		\begin{dmath*}
			m+1+p^{h-1}(m-j)+i.
		\end{dmath*}
		
		If \(m-j \geq 1\), then all terms vanish modulo \((u^{p^{h-1}+1})\). So we assume \(m=j\). The corresponding \(P_m\) term is
		\begin{dmath*}	\sum_{m=1}^{p-1} \u^{m+1} \sum_{i=0}^{p^{h-1}(m+1)-m}
			{{p^{h-1}(m+1)-m} \choose {i}} A^{i} B^{p^{h-1}(m+1)-m-i}.
		\end{dmath*}

	Since the exponent of \(\u\) is equal to \(m+i\),  the only nonzero terms occur only when
		\[ m+i < p^{h-1}.\]
		
		The corresponding powers of \(x\) have degree at least
		
		\begin{dmath*}
			p^{h-1}i + p^{h}(p^{h-1}(m+1)-m-i) > 	p^{h-1}i + p^{h}(p^{h-1}(m+1)-p^{h-1})
			\geq p^{2h-1}.
		\end{dmath*}

		Therefore the \(P_m(A,B)\) term of \(A\F B\) vanishes. This proves the claim.
		
		Next, we analyze the \(C_{p^{h-1}}(A,B)\) term. By Lemma \ref{Cpn},
	\begin{dmath*}
		C_{p^{h-1}}(A,B)  =\frac{1}{p} \left((A^{p^{h-2}}+B^{p^{h-2}})^{p}-A^{p^{h-1}} - B^{p^{h-1}}\right)
		=\sum_{i=1}^{p-1}\frac{1}{p}{p \choose i}A^{p^{h-2}(p-i)}B^{p^{h-2}i}.
		\end{dmath*}
		We note that
		\begin{dmath*}
			A^{p^{h-2}} =  \u^{p^{h-2}}(t_0x \F [t_1x^p + ... + t_{h-1}x^{p^{h-1}} + t_hx^{p^h}])^{p^{2h-3}} 
			=  \u^{p^{h-2}}(t_0x \F [t_1x^p + t_2x^{p^2}])^{p^{2h-3}}  \mod (x^{p^{2h-1}+1}).
		\end{dmath*}
		Any term involving the monomial \(t_2x^{p^2}\) will vanish in \(C_{p^{h-1}}(A,B)\) modulo \((x^{p^{2h-1}+1})\), since \(B\) has nonzero exponent.
		Therefore we may replace \(A^{p^{h-2}}\) with 
		\[ \u^{p^{h-2}}(t_0x \F t_1x^p)^{p^{2h-3}}. \]
	 Furthermore, since \(h\geq 3\), Lemma \ref{slayer} implies
		\[(t_0x \F t_1x^p)^{p^{2h-3}} = (t_0x + t_1x^p)^{p^{2h-3}} \ \mod (x^{p^{2h-1}+1}). \]

		We are still studying the \(	C_{p^{h-1}}(A,B)\) term, and we will now replace \(B\) with a simpler expression. We note that
		\begin{dmath*}
			B^{p^{h-2}} = (t_0x + t_1x^p + ... + t_{h-2}x^{p^{h-2}} + t_{h-1}x^{p^{h-1}})^{p^{2h-2}}
			=(t_0x+t_1x^p)^{p^{2h-2}} \mod (x^{p^{2h-1}+1}).
		\end{dmath*}
		Since the monomials involving \(A^{p^{h-2}}\) in the 	\(C_{p^{h-1}}(A,B)\) have nonzero degree, all monomials involving the \(t_1x^p\) term in \(B\) will vanish modulo \((x^{p^{2h-1}+1})\). Whence \(B\) 	may be replaced with
		\[t_0^{p^{2h-2}}x^{p^{2h-2}}.\]
		
		In summary, we have shown that, modulo \((x^{p^{2h-1}+1})\),
		\begin{dmath*}
			C_{p^{h-1}}(A,B) = \sum_{i=1}^{p-1}\frac{1}{p}{p \choose i}(\u^{p^{h-2}}(t_0x + t_1x^p)^{p^{2h-3}})^{p-i}(t_0^{p^{2h-2}}x^{p^{2h-2}})^{i}.
		\end{dmath*}

		The largest power of \(x\) in this sum is
		
		\[p^{2h-2}(p-i) + ip^{2h-2} =  p^{2h-1}.\]
		
		Therefore, the coefficient of \(x^{p^{2h-1}}\) from the \(C_{p^{h-1}}(A,B)\) term is
		\begin{equation}\label{C part}
			 -{\u}\sum_{i=1}^{p-1}\frac{1}{p}{p \choose i}(\u^{p^{h-2}}t_1^{p^{2h-3}})^{p-i}(t_0^{p^{2h-2}})^{i}
			= - \sum_{j=1}^{p-1}\frac{1}{p}{p \choose j}\u^{jp^{h-2}+1}t_1^{jp^{2h-3}}t_0^{(p-j)p^{2h-2}}.
\end{equation}
We have reindexed by setting \(j = p-i\).

		We are left to analyze the \(A+B\) term. This term is given by
		\begin{dmath*}
			A+B = \u(t_0x \F [t_1x^p + ... + t_{h-1}x^{p^{h-1}} + t_hx^{p^h}])^{p^{h-1}} + 
			(t_0x + t_1x^p + ... + t_{h-2}x^{p^{h-2}} + t_{h-1}x^{p^{h-1}})^{p^h}.
		\end{dmath*}
		
		The coefficient of \(x^{p^{2h-1}}\) in \(B\) is  \(t_{h-1}^{p^h}\), so the last step is to analyze \[A=\u(t_0x \F [t_1x^p + ... + t_{h-1}x^{p^{h-1}} + t_hx^{p^h}])^{p^{h-1}}.\]
		We let 
		\[A_0 =t_1x^p + ... + t_{h-1}x^{p^{h-1}} + t_hx^{p^h}. \]
		
		First, \(P_m(t_0x,A_0)^{p^{h-1}}=0\) modulo \((\u^{p^{h-1}+1})\) since it is divisible by \(\u^{p^{h-1}(m+1)}\), and \(m\geq 1\). Monomials in \(A\) arising from the \(C_{p^{h}}(t_0x,A_0)\) term have degree at least 
		\[p^{h-1} (p^{h}-1+p) > p^{2h-1},\] so the \(C_{p^{h}}(t_0x,A_0)\) term vanishes modulo \((x^{p^{2h-1}+1})\).

		We are left to compute the \(C_{p^{h-1}}(t_0x,A_0)\) term. By Lemma \ref{Cpn},
		\begin{dmath*}
			C_{p^{h-1}}(t_0x,A_0)^{p^{h-1}} 
			=\left[ \sum_{i=1}^{p-1} \frac{1}{p} {p \choose i} t_0^{ip^{h-2}}x^{ip^{h-2}} A_0^{p^{h-2}(p-i)} \right]^{p^{h-1}}
			= \sum_{i=1}^{p-1} \left[\frac{1}{p} {p \choose i}\right]^{p^{h-1}} t_0^{ip^{2h-3}}x^{ip^{2h-3}} A_0^{p^{2h-3}(p-i)}. 
		\end{dmath*}
		
		We note that modulo \((x^{p^{2h-1}+1})\),
		\[A_0^{p^{2h-3}} =t_1^{p^{2h-3}}x^{p^{2h-2}} + t_2^{p^{2h-3}}x^{p^{2h-1}}.   \]
		
		Monomials involving the \(t_2^{p^{2h-3}}x^{p^{2h-1}}\) term from \(A_0\) will vanish modulo \((x^{p^{2h-1}+1})\) in the \\ \(C_{p^{h-1}}(t_0x,A_0)^{p^{h-1}}\) term. Therefore
		\begin{dmath*}
			C_{p^{h-1}}(t_0x,A_0)^{p^{h-1}} =  \sum_{i=1}^{p-1} \left[ \frac{1}{p} {p \choose i}\right]^{p^{h-1}} t_0^{ip^{2h-3}}x^{ip^{2h-3}} (t_1^{p^{2h-3}}x^{p^{2h-2}})^{p-i} .
		\end{dmath*}

		The degree of \(x\) is
		\[ip^{2h-3}+p^{2h-2}(p-i) = p^{2h-1} - i(p^{2h-2}-p^{2h-3}) < p^{2h-1}.\]
		So the \(C_{p^{h-1}}\) term does not contribute to the coefficient of \(x^{p^{2h-1}}\).
		
		Therefore the coefficient of \(x^{p^{2h-1}}\) in \(A\)
		is the coefficient in
		\[\u (t_0x + (t_1x^p + ... + t_hx^{p^{h}}))^{p^{h-1}},\]
		which is 
	\[	\u t_h^{p^{h-1}}.
			\]
		
		The coefficient of \(x^{p^{2h-1}}\) in the linear term \(A+B\) is thus
		
		\begin{equation}\label{linear part}
			t_{h-1}^{p^{h}} + \u t_h^{p^{h-1}}. 
			\end{equation}
		
		We combine equations \eqref{C part} and \eqref{linear part} to arrive at the coefficient of \(x^{p^{2h-1}}\) on the right-hand side of \eqref{recursion}:
		\[- \sum_{j=1}^{p-1}\frac{1}{p}{p \choose j}\u^{jp^{h-2}+1}t_1^{jp^{2h-3}}t_0^{(p-j)p^{2h-2}} +t_{h-1}^{p^{h}} + \u t_h^{p^{h-1}}. \qedhere
		\] 
\end{proof}

\begin{lemma}\label{lhs} Assume \(h>2\). Modulo \((I_{h-1},\u^{p^{h-1}-1})\), the coefficient of \(x^{p^{2h-1}}\) on the left-hand side of \eqref{recursion} is equal to
	\(t_{h-1}. \)
	\end{lemma}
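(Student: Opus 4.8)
The strategy is to exploit that $h_g\colon g_*F\to F$ is a morphism of formal group laws, which lets the computation split along the $\gF$-sum. Working modulo $I_{h-1}$, the automorphism $g_*$ fixes $p$ and preserves the invariant ideal, so it annihilates $p,u_1,\dots,u_{h-2}$; hence $[p]_{g_*F}(x)\equiv g_*(\u)\,x^{p^{h-1}}\gF x^{p^h}$. Write $P=g_*(\u)\,x^{p^{h-1}}$ and $Q=x^{p^h}$; by Theorem~\ref{action}, $P=\u\, t_0^{p^{h-1}-1}x^{p^{h-1}}$, so in particular $\u\mid P$ while $Q$ is a monomial free of $\u$. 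Since $h_g$ is a morphism of formal group laws, the left-hand side of \eqref{recursion} is, modulo $(I_{h-1},x^{p^{2h-1}+1})$,
\[
h_g\big([p]_{g_*F}(x)\big)\;\equiv\;h_g(P\gF Q)\;=\;h_g(P)\F h_g(Q),
\]
and it remains to extract the coefficient of $x^{p^{2h-1}}$.

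For $h_g(Q)={\sum_i}^F t_i\,x^{p^{h+i}}$ only the terms with $i\le h-1$ survive modulo $x^{p^{2h-1}+1}$, and Lemmas~\ref{distributor} and \ref{slayer} apply at each stage of the $\F$-sum (the partial sums are divisible by $x^{p^h}$, the new summand by $x^{p^{h+k}}$ with $h+k\le 2h-1$, and $p^h(p^{h-1}-1)+p^{h+k}\ge p^{2h-1}+1$), so $h_g(Q)\equiv t_0x^{p^h}+t_1x^{p^{h+1}}+\dots+t_{h-1}x^{p^{2h-1}}$. Thus the coefficient of $x^{p^{2h-1}}$ in $h_g(Q)$ is $t_{h-1}$, and $x^{p^h}\mid h_g(Q)$. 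For $h_g(P)={\sum_i}^F t_i\,P^{p^i}$, divisibility of $P$ by $\u$ gives $\u^{p^i}\mid P^{p^i}$ (working mod $p$), so the summands with $i\ge h-1$ are $\equiv 0$ mod $\u^{p^{h-1}-1}$, while for $i\le h-2$ the summand $t_i P^{p^i}=\u^{p^i}t_0^{p^i(p^{h-1}-1)}t_i\,x^{p^{h-1+i}}$ has $x$-degree $\le p^{2h-3}$. Each cross-term in this $\F$-sum is a $C_{p^{h-1}}$, $C_{p^h}$, or $P_m$ applied to partial sums divisible by $\u$; using Lemma~\ref{Cpn} for the $C$'s and the explicit $\u^{m+1}$ prefactor for the $P_m$'s, every such term is divisible by $\u$ to a power $\ge p^{h-1}-1$ and hence vanishes. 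Therefore, modulo $(I_{h-1},x^{p^{2h-1}+1},\u^{p^{h-1}-1})$, every term of $h_g(P)$ has $x$-degree $<p^{2h-1}$; in particular its coefficient of $x^{p^{2h-1}}$ is $0$, and $x^{p^{h-1}}\mid h_g(P)$.

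It remains to expand $h_g(P)\F h_g(Q)=h_g(P)+h_g(Q)+(\text{corrections})$ via Theorem~\ref{F(x,y) computation}. Every correction is a $C_{p^{h-1}}$, $C_{p^h}$, or $P_m$ evaluated at the pair $(h_g(P),h_g(Q))$. Because $\u\mid h_g(P)$, such a correction is divisible by $\u$ to a power that grows with the number of $h_g(P)$-factors occurring in it; balancing that against the $x$-adic order (each of $h_g(P),h_g(Q)$ is divisible by $x^{p^{h-1}}$, and $h_g(Q)$ even by $x^{p^h}$) one finds, exactly as in the proof of Lemma~\ref{slayer}, that every correction monomial either has $x$-degree $>p^{2h-1}$ or is divisible by $\u^{p^{h-1}-1}$. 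Hence all corrections vanish modulo $(I_{h-1},x^{p^{2h-1}+1},\u^{p^{h-1}-1})$, and the coefficient of $x^{p^{2h-1}}$ on the left-hand side of \eqref{recursion} equals the contribution of $h_g(P)$, which is $0$, plus that of $h_g(Q)$, which is $t_{h-1}$; that is, it equals $t_{h-1}$.

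The main obstacle is the final bookkeeping: verifying that \emph{all} correction terms — those arising inside $h_g(P)$ and those arising in $h_g(P)\F h_g(Q)$ — vanish after reducing modulo $(I_{h-1},x^{p^{2h-1}+1},\u^{p^{h-1}-1})$. Conceptually this is routine, but it requires the same simultaneous $x$-adic and $\u$-adic order estimates as Lemma~\ref{slayer}, with the extra subtlety that $h_g(P)$ is $\u$-divisible whereas $h_g(Q)$ is not, so one must keep track of which factors of a given monomial come from $P$ and which from $Q$.
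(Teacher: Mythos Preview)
Your decomposition via the morphism property, \(h_g(P\gF Q)=h_g(P)\F h_g(Q)\), is a genuinely different organization from the paper's. The paper instead expands \(h_g([p]_{g_*F}(x))\) as \(t_0(P\gF Q)\F t_1(P\gF Q)^p\F\cdots\) and sets \(A=t_0(P\gF Q)\), \(B=t_1(\ldots)^p+\cdots\); the point of that choice is that their \(B\) splits as \(\u^pB_0+B_1\), so the \(\u\)-weight needed to kill \(\u\,C_{p^{h-1}}(A,B)\) comes from the \(B\)-side. In your setup all the \(\u\)-weight must come from \(A=h_g(P)\), and this is where your sketch falls short.

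Concretely, in \(\u\,C_{p^{h-1}}(A,B)=\u\sum_{i=1}^{p-1}\tfrac{1}{p}\binom{p}{i}A^{ip^{h-2}}B^{(p-i)p^{h-2}}\), your stated bound ``\(\u\) to a power that grows with the number of \(h_g(P)\)-factors'' gives only \(\u^{1+ip^{h-2}}\), and for every \(1\le i\le p-1\) this is strictly less than \(\u^{p^{h-1}+1}\) (indeed less than \(\u^{p^{h-1}-1}\) for \(i\le p-2\)). So the appeal to ``exactly as in Lemma~\ref{slayer}'' does not close the argument: Lemma~\ref{slayer} tracks only \(x\)-orders, and the coarse \(\u\)-bound you invoke is insufficient. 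What actually makes your approach work is the finer observation that \(P=\u\,t_0^{p^{h-1}-1}x^{p^{h-1}}\) is a \emph{monomial} in \(x\), so every monomial of \(A=h_g(P)\) with \(x\)-degree \(np^{h-1}\) is divisible by \(\u^{n}\). With this in hand one checks that \(B^{(p-i)p^{h-2}}\equiv t_0^{(p-i)p^{h-2}}x^{(p-i)p^{2h-2}}\) modulo \(x^{p^{2h-1}+1}\) (any higher \(B\)-monomial pushes the total \(x\)-degree past \(p^{2h-1}\)), which forces the \(A\)-part to supply \(x\)-degree \(ip^{2h-2}\) and hence \(\u\)-degree \(\ge ip^{h-1}\); together with the prefactor this gives \(\u^{ip^{h-1}+1}\), which does vanish. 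The same refinement handles the \(P_m\) corrections. Once you insert this missing link your route is complete and arguably cleaner conceptually than the paper's, at the cost of a slightly more delicate \(\u\)-versus-\(x\) bookkeeping in the final step.
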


\begin{proof} Throughout this proof, all equalities are understood to be modulo \((x^{p^{2h-1}+1})\).
	
		The left-hand side of \eqref{recursion} is given by
		\begin{dmath*}
			h_g([p]_{g_*F}(x)) = t_0(g_*(\u)x^{p^{h-1}} \gF x^{p^h}) \F  t_1(g_*(\u)x^{p^{h-1}} \gF x^{p^h})^p \F ... \F  t_{h-1}(g_*(\u)x^{p^{h-1}} \gF x^{p^h})^{p^{h-1}} \F  t_h(g_*(\u)x^{p^{h-1}})^{p^h}
			\overset{(!)}= t_0(g_*(\u)x^{p^{h-1}} \gF x^{p^h})  \F      \left [t_1(g_*(\u)x^{p^{h-1}} + x^{p^h})^p \F ... \F  t_{h-1}(g_*(\u)x^{p^{h-1}} + x^{p^h})^{p^{h-1}} \F  t_h(g_*(\u)x^{p^{h-1}})^{p^h}\right]
			\overset{(!!)}= t_0(g_*(\u)x^{p^{h-1}} \gF x^{p^h})  \F    \left[  t_1(g_*(\u)x^{p^{h-1}} + x^{p^h})^p + ... +  t_{h-1}(g_*(\u)x^{p^{h-1}} + x^{p^h})^{p^{h-1}} +  t_h(g_*(\u)x^{p^{h-1}})^{p^h}\right].
		\end{dmath*}

		Here we use Lemma \ref{slayer}. The relevant inequalities are
		
		\begin{enumerate}
			\item[(!)] 	\(p^k(p^{h-1}(p^{h-1}-1) + p^h) > p^{2h-1} \ \text{and} \ p^h \leq p^{h-1}p^{h-1} \ \text{if, and only if} \  
			k\geq 1	\)
			\item[(!!)] \(p^h(p^{h-1}-1) + p^{h-1+k} > p^{2h-1} \ \text{and} \ p^{2h-1}\leq p^{h-1}p^h \ \text{if, and only if} \ k\geq 2\).
		\end{enumerate}
		
		We need to compute
			\[h_g([p]_{g_*F}(x)) = A \F B\]
		where
		\[A= t_0(g_*(\u)x^{p^{h-1}} \gF x^{p^h}) \]
		\begin{dmath*}
			B= t_1(g_*(\u)x^{p^{h-1}} + x^{p^h})^p + ... +  t_{h-1}(g_*(\u)x^{p^{h-1}} + x^{p^h})^{p^{h-1}} +  t_h(g_*(\u)x^{p^{h-1}})^{p^h} 
			= t_1(g_*(\u)x^{p^{h-1}} + x^{p^h})^p + ... +  t_{h-1}(g_*(\u)x^{p^{h-1}} + x^{p^h})^{p^{h-1}} \  \mod \ (u^{p^{h-1}+1}).
		\end{dmath*}
	We obtain the last equality since \(g_*(\u)\) is divisible by \(\u\) by Theorem \ref{action}.

Before analyzing \(A \F B\), we collect the terms in \(A\) and \(B\) that are divisible by \(\u\). We'll begin with \(A\). We use Theorem \ref{F(x,y) computation} and Theorem \ref{action} to obtain
\begin{dmath*}
	A =t_0\left(\u t_0^{p^{h-1}-1}x^{p^{h-1}} + x^{p^h} -\frac{\u t_0^{p^{h-1}-1}}{1-p^{p^{h-1}-1}}C_{p^{h-1}}(g_*(\u)x^{p^{h-1}}, x^{p^h}) - \frac{1}{1-p^{p^{h}-1}}C_{p^h}(g_*(\u)x^{p^{h-1}}, x^{p^h})\\
	+\sum_{m=1}^{p-1}(\u t_0^{p^{h-1}-1})^{m+1}P_{m}(g_*(\u)x^{p^{h-1}}, x^{p^h})\right).
\end{dmath*}

Since \(C_{p^h}(g_*(\u)x^{p^{h-1}},x^{p^h}) =0 \mod (x^{p^{2h-1}+1})\), we let

\begin{dmath*}
	A_0 = \left( t_0^{p^{h-1}}x^{p^{h-1}} -\frac{t_0^{p^{h-1}}}{1-p^{p^{h-1}-1}}C_{p^{h-1}}(g_*(\u)x^{p^{h-1}}, x^{p^h})  \\
	+\sum_{m=1}^{p-1}t_0^{(p^{h-1}-1)(m+1)+1}\u^mP_{m}(g_*(\u)x^{p^{h-1}}, x^{p^h})\right),
\end{dmath*}
so that
\begin{equation}\label{A}
	A = \u A_0 + t_0x^{p^h}.
	\end{equation}

Next, we expand \(B\) as
\begin{dmath*}
	B= t_1(g_*(\u)x^{p^{h-1}} + x^{p^h})^p + ... +  t_{h-1}(g_*(\u)x^{p^{h-1}} + x^{p^h})^{p^{h-1}}  
	= t_1(\u t_0^{p^{h-1}-1}x^{p^{h-1}} + x^{p^h})^p + ... +  t_{h-1}(\u t_0^{p^{h-1}-1}x^{p^{h-1}} + x^{p^h})^{p^{h-1}} 
	=\u^p \left(t_1t_0^{p^{h}-p}x^{p^h} + ... +t_{h-1}\u^{p^{h-2}}t_0^{p^{2h-2}-p^{h-1}}x^{p^{2h-2}} \right) + \left(t_1x^{p^{h+1}}  + ... + t_{h-1}x^{p^{2h-1}}     \right).
\end{dmath*}

We set
\[ B_0 =t_1t_0^{p^{h}-p}x^{p^h}+ ... +t_{h-1}\u^{p^{h-2}}t_0^{p^{2h-2}-p^{h-1}}x^{p^{2h-2}}  \]
and
\[ B_1 =t_1x^{p^{h+1}}  + ... + t_{h-1}x^{p^{2h-1}}    \]	
so that
\begin{equation}\label{B}
	B=\u^p B_0 + B_1.
	\end{equation}

		We consider first the \(P_m(A,B)\) term in \(A\F B\). Ignoring all integer coefficients, we consider
		\begin{equation}\label{P(A,B)}
			\sum_{m=1}^{p-1}\u^{m+1}\sum_{j=0}^m (A+B)^{p^{h-1}(j+1)-m}(A^{p^{h-1}} + B^{p^{h-1}})^{m-j}.
			\end{equation}
		
		Let us first consider \eqref{P(A,B)} in the case \(m-j \neq 0\). We claim that, in this case,
		 \[P_m(A,B) \equiv 0.\]

		We first observe that
		\begin{dmath*}
			B^{p^{h-1}} = \u^{p^h}B_0^{p^{h-1}}+B_1^{p^{h-1}}
			 =0 \mod (\u^{p^{h-1}+1}, x^{p^{2h-1}+1}).
			\end{dmath*}	
	 Moreover, 
		\begin{dmath*}\u^{m+1}A^{p^{h-1}(m-j)} = \u^{m+1}t_0^{p^{h-1}(m-j)}x^{p^{2h-1}(m-j)} \mod (u^{p^{h-1}+1}).
			\end{dmath*}
	Since the exponent \(p^{h-1}(j+1)-m\) on \(A+B\) is nonzero, and  \(x\) divides \(A+B\), all terms involving  \(\u^{m+1}A^{p^{h-1}(m-j)}\)  vanish modulo \((x^{p^{2h-1}+1})\). This proves the claim.
		
		Therefore we need only consider the case \(m=j\). We set \(m=j\) in \eqref{P(A,B)} to obtain
		\begin{dmath*}
		 \sum_{m=1}^{p-1}\u^{m+1} (A+B)^{p^{h-1}(m+1)-m}=
			\sum_{m=1}^{p-1}\u^{m+1}\sum_{i=0}^{p^{h-1}(m+1)-m} {{p^{h-1}(m+1)-m} \choose {i} }(\u A_0 +t_0x^{p^h})^iB^{p^{h-1}(m+1)-m -i}
			=\sum_{m=1}^{p-1}\sum_{i=0}^{p^{h-1}(m+1)-m}\sum_{j=0}^i  {{p^{h-1}(m+1)-m} \choose {i} }{{i} \choose {j}}\u^{m+1+j} A_0^j t_0^{i-j}x^{p^h(i-j)}B^{p^{h-1}(m+1)-m -i}
		\end{dmath*}

		We remark that this sum has nonvanishing terms modulo \((\u^{p^{h-1}+1})\) only when \[j<p^{h-1} - m . \]
		
		Since \(x^{p^{h-1}}\) divides \(A_0\) and \(x^{p^h}\) divides \(B\), the following power of \(x\) divides the sum

		\begin{dmath*}
			p^{h-1}j + p^{h}(i-j + p^{h-1}(m+1)-m-i)	=  	p^{2h-1}  + m(p^{2h-1} -p^h) - j(p^h -p^{h-1})
			> 	p^{2h-1}  + m(p^{2h-1} -p^h) - (p^{h-1} - m)(p^h -p^{h-1})
			= m(p^{2h-1}-p^{h-1}) + p^{2h-2}
			\geq p^{2h-1} + p^{2h-2}-p^{h-1}
			>p^{2h-1}.
		\end{dmath*}
	This shows that the \(P_m(A,B)\) term vanishes modulo \((x^{p^{2h-1}+1})\) in the case \(m=j\), and therefore that
	\[P_m(A,B) \equiv 0\]
	for all \(m,j\).

	Next, we will analyze the \(C_{p^{h-1}}(A,B)\) term and the  \(C_{p^h}(A,B)\) term. The latter is divisible by the following power of \(x\)
		\[p^{h-1}(p^h-1) + p^h > p^{2h-1}.\]
		So it vanishes modulo \((x^{p^{2h-1}+1})\).
	
		We focus now on the \(C_{p^{h-1}}\) term. By Lemma \ref{Cpn},
		\begin{dmath*}
			C_{p^{h-1}}(A,B) 
			=\sum_{i=1}^{p-1}\frac{1}{p}{p\choose i} A^{p^{h-2}i}B^{p^{h-2}(p-i)}
			=\sum_{i=1}^{p-1}\frac{1}{p}{p\choose i} A^{p^{h-2}i}(\u^pB_0 + B_1)^{p^{h-2}(p-i)}.
		\end{dmath*}
		We note that
		 \[B_1^{p^{h-2}} =t_1^{p^{h-2}}x^{p^{2h-1}} \mod (x^{p^{2h-1}+1}). \] 
		 Since \(x\) divides \(A\), all terms involving \(B_1\) vanish modulo \((x^{p^{2h-1}+1})\).
		Whence \(\u^{p^{h-1}}\) divides \(C_{p^{h-1}}(A,B)\), and therefore \( \u C_{p^{h-1}}(A,B)\ =0 \).
		
		So far we have shown that, modulo \((x^{p^{2h-1}+1},\u^{p^{h-1}+1})\),
			\begin{dmath*}
			h_g([p]_{g_*F}(x)) = A + B 
		 =  t_0(g_*(\u)x^{p^{h-1}} \gF x^{p^h}) +[ t_1(g_*(\u)x^{p^{h-1}} + x^{p^h})^p + ... +  t_{h-1}(g_*(\u)x^{p^{h-1}} + x^{p^h})^{p^{h-1}}].
		 			\end{dmath*}
		The coefficient of \(x^{p^{2h-1}}\) here is
		\[c+ t_{h-1} \]
		where \(c\) is the coefficient of \(x^{p^{2h-1}}\) in 
		\[A=  t_0(g_*(\u)x^{p^{h-1}} \gF x^{p^h}).\]
		We'll show that \(c=0\).

		As before,  \[C_{p^h}(g_*(\u)x^{p^{h-1}},x^{p^h}) =0 \mod (x^{p^{2h-1}+1}).\]  
		
		We'll consider the \(P_m(g_*(\u)x^{p^{h-1}},x^{p^h})\) term now. Ignoring the integer coefficients, we  consider
		\begin{equation}\label{P}
			\sum_{m=1}^{p-1}\u^{m+1}  \sum_{j=0}^{m} (g_*(\u)x^{p^{h-1}} +x^{p^h})^{p^{h-1}(j+1)-m}(g_*(\u)^{p^{h-1}}x^{p^{2h-2}} + x^{p^{2h-1}})^{m-j}.
		\end{equation}
		
		As before, if \(m-j \neq 0\), all terms vanish modulo \((x^{p^{2h-1}+1},\u^{p^{h-1}+1})\). We set \(m=j\) in \eqref{P} to obtain
		\begin{dmath*}
			\sum_{m=1}^{p-1}\u^{m+1} (g_*(\u)x^{p^{h-1}} +x^{p^h})^{p^{h-1}(m+1)-m}
			= \sum_{m=1}^{p-1}\u^{m+1} \sum_{i=0}^{p^{h-1}(m+1)-m}{{p^{h-1}(m+1)-m} \choose {i} }   g_*(\u)^ix^{ip^{h-1}+p^h (p^{h-1}(m+1)-m-i)}.
		\end{dmath*}

		Since \(\u\) divides \(g_*(\u)\), the above sum will vanish modulo \((\u^{p^{h-1}+1})\) unless
		\[ i +m+1 < p^{h-1}+1.\]
		In this case, the exponent on \(x\) is
		\begin{dmath*}
			ip^{h-1} + p^h(p^{h-1}(m+1)-m-i) > 	ip^{h-1} + p^h(p^{h-1}(m+1)-p^{h-1}) = mp^{2h-1} + ip^{h-1} \geq p^{2h-1}.
		\end{dmath*}
	We have shown that
\[P_m(g_*(\u)x^{p^{h-1}},x^{p^h})\equiv 0.\]

		Using Lemma \ref{Cpn}, the \(C_{p^{h-1}}\) term is
		\begin{dmath*}
			C_{p^{h-1}}(g_*(\u)x^{p^{h-1}},x^{p^h}) = \sum_{i=1}^{p-1}\frac{1}{p} {p \choose i} g_*(\u)^{ip^{h-2}}x^{ip^{2h-3}+ p^{2h-2}(p-i)}.
		\end{dmath*}
		
		The exponent on \(x\) is
		\begin{dmath*} ip^{2h-3}+ p^{2h-2}(p-i) = i(p^{2h-3}-p^{2h-2}) + p^{2h-1} < p^{2h-1} . 
			\end{dmath*}
		
		So the \(C_{p^{h-1}}\) term does not contribute to the coefficient of \(x^{p^{2h-1}}\). Hence
		\[A=  t_0(g_*(\u)x^{p^{h-1}} + x^{p^h}),\]
		 \(c=0\) and \(A\) does not contribute to the coefficient of \(x^{p^{2h-1}}\). 
		
		We conclude that the coefficient on the left-hand side of \eqref{recursion} is
		\(t_{h-1}. \)
	\end{proof}

	\begin{restatable}{theorem}{th}\label{th} Assume \(h\geq 2\). Modulo \( (p,u_1,...,u_{h-2},\u^{p^{h-1}+1})\),
		\begin{dmath*}
			t_{h-1} 	=
			t_{h-1}^{p^{h}} + \u t_h^{p^{h-1}} 
			- \sum_{j=1}^{p-1}\frac{1}{p}{p \choose j}\u^{jp^{h-2}+1}t_1^{jp^{2h-3}}t_0^{p^{2h-2}(p-j)}.
		\end{dmath*}
	\end{restatable}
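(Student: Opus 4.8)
The plan is to obtain Theorem \ref{th} as the $k = h-1$ analogue of Theorem \ref{tk}: one again compares the coefficient of $x^{p^{h+k}} = x^{p^{2h-1}}$ on the two sides of the defining relation \eqref{recursion}, $h_g([p]_{g_*F}(x)) = [p]_F(h_g(x))$, the new feature being that the interaction of the $\u x^{p^{h-1}}$ and $x^{p^h}$ summands of $[p]_F$ under $\F$ now produces an extra $C_{p^{h-1}}$-correction that was absent for $k < h-1$. The base case $h = 2$ is precisely Lader's computation in \cite{lader2013resolution}, so I would dispose of it by citation and assume $h > 2$ from here on.

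For $h > 2$, both sides of \eqref{recursion} are power series in $R[[x]]$, and it suffices to work modulo $(I_{h-1}, \u^{p^{h-1}+1}, x^{p^{2h-1}+1})$ and read off the coefficient of $x^{p^{2h-1}}$. Lemma \ref{lhs} identifies the left-hand coefficient as $t_{h-1}$, while Lemma \ref{rhs} identifies the right-hand coefficient as $t_{h-1}^{p^{h}} + \u t_h^{p^{h-1}} - \sum_{j=1}^{p-1}\frac{1}{p}{p \choose j}\u^{jp^{h-2}+1}t_1^{jp^{2h-3}}t_0^{p^{2h-2}(p-j)}$. Equating the two expressions gives the claimed identity, so at this level the proof is just the assembly of these two lemmas.

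Accordingly, the genuine content lies inside Lemmas \ref{rhs} and \ref{lhs}, and the step I expect to be the main obstacle is the right-hand side computation of Lemma \ref{rhs}: one must determine which of the $C_{p^n}(A,B)$ and $P_m(A,B)$ contributions coming from the formula for $F$ in Theorem \ref{F(x,y) computation} survive modulo $(x^{p^{2h-1}+1}, \u^{p^{h-1}+1})$, replace $A$ and $B$ by their leading monomials, and track $\u$-valuations carefully; the sum over $j$ in the statement emerges precisely as the surviving part of the $C_{p^{h-1}}(A,B)$ term after rewriting it via Lemma \ref{Cpn}. The technical engine licensing all of these replacements is Lemma \ref{slayer} (together with Note \ref{gslayer}), which allows $\F$-sums to be replaced by ordinary sums to the needed precision. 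Beyond this, the only care required when writing Theorem \ref{th} itself is minor bookkeeping: checking that the truncation $\u^{p^{h-1}+1}$ stated in the theorem is what the two lemmas actually establish, and that truncating at $x^{p^{2h-1}+1}$ discards no monomial of degree exactly $p^{2h-1}$.
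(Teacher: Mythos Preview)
Your proposal is correct and matches the paper's own proof essentially line for line: the paper also cites Lader for $h=2$, then for $h>2$ simply equates the coefficients of $x^{p^{2h-1}}$ computed in Lemmas \ref{rhs} and \ref{lhs}. Your identification of where the real work lies (inside those two lemmas, with Lemma \ref{slayer} and Lemma \ref{Cpn} as the technical engine, and the $C_{p^{h-1}}$ term as the source of the $j$-sum) is also exactly how the paper organizes the argument.
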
	
	
	\begin{proof} The case \(h=2\) is treated by Lader in \cite{lader2013resolution}, so we assume \(h>2\).

			We equate the coefficients of \(x^{p^{2h-1}}\) calculated in Lemmas \ref{rhs} and \ref{lhs} to
	 complete the proof.
		\end{proof}

	\section{Formulas at height \(h=3\)}\label{h=3}
	
In this section we illustrate the utility of Theorems \ref{tk} and \ref{th} by computing an explicit approximation of \(t_0\) at height \(h=3\) for all primes \(p>2\).

We begin by analyzing our formulas for \(t_k\) and \(t_{h-1}\) to determine how accurately we can compute \(t_0\). Throughout this section we implicitly work modulo \(I_{h-1}\). 

Given an element \(g\in \SSS_h\) with expansion
\[g = 1 + g_1S + g_2S^2 + \cdots + g_{h-1}S^{h-1}+g_hS^h \mod (S^{h+1}),\]
we can express \(t_{h-1}\) in terms of \(g_1,g_{h-1}\), and \(g_h\) modulo \((\u^{p^{h-1}+1})\) by Theorem \ref{th}.
By Theorem \ref{tk},
\begin{dmath*}
	t_{h-2}= t_{h-2}^{p^h}+\u t_{h-1}^{p^{h-1}}-\u^{p^{h-1}}t_{h-1}t_0^{p^{h-1}(p^{h-1}-1)}.
\end{dmath*}
In the above expression,  the accuracy of \(t_{h-1}\) in the third term is the limiting factor. Therefore \(t_{h-2}\) can be expressed modulo \((\u^{2p^{h-1}+1})\). 

We see inductively 
that \(t_k\) can be computed modulo \((u^{2p^{h-1}+p^{h-2}+...+p^{k+1}+1})\). Hence the maximum accuracy possible to obtain with these formulas is \(u^{\Phi(h) + p^{h-1}}\) where
\[ \Phi(h) = p^{h-1} + p^{h-2} + ... + p +1.\]

This agrees with the accuracy of Corollary 3.4 of \cite{lader2013resolution}, and Corollary 4.5 of \cite{henn2013homotopy}.
		
With this in mind, we specialize to the case \(h=3\) and \(p>2\). We first untangle the recursion of Theorems \ref{tk} and \ref{th} to express \(t_0\) in terms of the coefficients of \(g\).

	\begin{lemma}\label{t0h=3lem} Assume \(h=3\) and \(p>2\). Let \(g = 1+g_1S+g_2S^2+g_3S^3 \mod (S^4)\). Then modulo \((p,u_1,\uu^{2p^2+p+1})\),
	\begin{align*}
		t_0 =& 1+ \uu(g_1^{p^2} +\uu^{p^2}g_2^p) \\
		&- \uu^p\left(g_1 + \uu (g_2^{p^2} + \uu^{p^2}g_3^p) - \uu^{p^2} \left( g_2 + \uu g_3^{p^2} - \sum_{j=1}^{p-1}\frac{1}{p}{{p} \choose {j}} \uu^{jp +1}g_1^{j}   \right) (1 + \uu^{p^2}g_1^p)^{p^2-1}\right)\\
		&\hspace{4cm} \left(1 + \uu^p g_1 - \uu^{p^2} (g_1^p + \uu^p g_2) (1+\uu^{p^2}g_1^p )^{p^2-1}  \right)^{p^2-1}.
	\end{align*}
\end{lemma}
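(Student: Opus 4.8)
The plan is to untangle the three recursions available at $h=3$. Working modulo $I_{h-1}=(p,u_1)$ throughout, these are the cases $k=0,1$ of Theorem~\ref{tk},
\[
t_0 = t_0^{p^3} + \uu\, t_1^{p^2} - \uu^{p}\,t_1\,t_0^{p^3-p},\qquad
t_1 = t_1^{p^3} + \uu\, t_2^{p^2} - \uu^{p^2}\,t_2\,t_0^{p^4-p^2},
\]
together with Theorem~\ref{th} at $h=3$,
\[
t_2 = t_2^{p^3} + \uu\, t_3^{p^2} - \sum_{j=1}^{p-1}\frac{1}{p}\binom{p}{j}\, \uu^{jp+1}\,t_1^{jp^3}\,t_0^{p^4(p-j)}\pmod{\uu^{p^2+1}},
\]
the normalizations $t_i\equiv g_i\pmod{(I_{h-1},\uu)}$ recalled in Section~\ref{The Action}, and $g_i^{p^3}=g_i$. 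The only computational tool is the identity $(a_1+\dots+a_m)^{p^k}=a_1^{p^k}+\dots+a_m^{p^k}$ valid in any ring where $p=0$: in practice one raises a power series summand by summand and then discards the terms whose $\uu$-order has become too large. The inequalities that make these truncations legitimate, all of which require $p>2$, are $p^3>2p^2+p+1$, $\;p^4>2p^2+p$, and $(jp+1)p^2>2p^2$ for $j\geq1$.

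The recursions are only apparently circular, and the first step removes this. The $k=0$ relation together with $t_i\equiv g_i\pmod{\uu}$ gives $t_0\equiv 1+\uu\, g_1^{p^2}\pmod{\uu^2}$ — the last term is divisible by $\uu^p$, hence by $\uu^2$ — and raising to $p$-powers then yields $t_0^{p^2}\equiv 1+\uu^{p^2}g_1^{p}\pmod{\uu^{2p^2}}$ (using $g_1^{p^4}=g_1^{p}$) and $t_0^{p^3}\equiv 1\pmod{\uu^{2p^2+p+1}}$. Next I would compute $t_2$ from Theorem~\ref{th}: $t_2^{p^3}\equiv g_2$, $\;\uu\, t_3^{p^2}\equiv \uu\, g_3^{p^2}$, and in the sum one may replace $t_1^{jp^3}$ by $g_1^{j}$ and $t_0^{p^4(p-j)}$ by $1$ to the accuracy required, so that
\[
t_2 \equiv g_2+\uu\, g_3^{p^2}-\sum_{j=1}^{p-1}\frac{1}{p}\binom{p}{j}\, \uu^{jp+1}g_1^{j}\pmod{\uu^{p^2+1}}.
\]

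Feeding this into the $k=1$ relation, with $t_1^{p^3}\equiv g_1$, with $\uu\, t_2^{p^2}\equiv \uu\,(g_2^{p^2}+\uu^{p^2}g_3^{p})$ obtained by raising the previous display summand by summand, and with $t_0^{p^4-p^2}=(t_0^{p^2})^{p^2-1}\equiv(1+\uu^{p^2}g_1^{p})^{p^2-1}\pmod{\uu^{p^2+1}}$, one gets $t_1$ modulo $\uu^{2p^2+1}$ in closed form. Raising the $k=0$ relation to the $p$-th power and using $t_1^{p^3}\equiv g_1$, $\;t_1^{p}\equiv g_1^{p}+\uu^{p}g_2\pmod{\uu^{p^2+1}}$ (again summand by summand from the closed form of $t_1$), and $t_0^{p^4-p^2}\equiv(1+\uu^{p^2}g_1^{p})^{p^2-1}$, produces
\[
t_0^{p}\equiv 1+\uu^{p}g_1-\uu^{p^2}(g_1^{p}+\uu^{p}g_2)(1+\uu^{p^2}g_1^{p})^{p^2-1}\pmod{\uu^{2p^2+1}}.
\]
Finally I would substitute into $t_0=1+\uu\, t_1^{p^2}-\uu^{p}t_1\,(t_0^{p})^{p^2-1}\pmod{\uu^{2p^2+p+1}}$: the middle term is $\uu\,(g_1^{p^2}+\uu^{p^2}g_2^{p})$, while the last term is $\uu^{p}$ times the product of the two closed forms just obtained, each known modulo $\uu^{2p^2+1}$. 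Collecting everything gives the asserted nested expression for $t_0$.

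The main obstacle is bookkeeping: every quantity is needed to precisely the accuracy at which it can be determined — $t_2$ modulo $\uu^{p^2+1}$, $t_1$ and $t_0^{p}$ modulo $\uu^{2p^2+1}$, and $t_0$ modulo $\uu^{2p^2+p+1}$ — so at each truncation one must check that the terms being discarded carry a sufficiently high power of $\uu$. This is exactly where the hypothesis $p>2$ is used, and where the argument breaks at $p=2$ (already $t_0^{p^3}\not\equiv 1$ modulo $\uu^{2p^2+p+1}$). A secondary subtlety is that the answer is deliberately left unexpanded in the factors $(1+\uu^{p^2}g_1^{p})^{p^2-1}$: these should be carried as the needed truncation of $t_0^{p^2(p^2-1)}$ rather than expanded via the binomial theorem, so that the substitutions close up into the compact recursive form claimed.
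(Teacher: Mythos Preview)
Your proposal is correct and follows essentially the same approach as the paper's proof: specialize the recursions of Theorems~\ref{tk} and~\ref{th} to $h=3$, use $t_i\equiv g_i\pmod{\uu}$ together with $g_i^{p^3}=g_i$ and $p^3>2p^2+p+1$ to replace the $t_i^{p^3}$ terms, compute $t_2$ explicitly modulo $\uu^{p^2+1}$, then successively determine $t_0^{p^2}$, $t_1^p$, $t_2^{p^2}$, $t_0^p$ and substitute back into the $k=0$ relation. Your accuracy bookkeeping (e.g.\ $t_1^p$ needed only modulo $\uu^{p^2+1}$ because it enters with a factor $\uu^{p^2}$) matches the paper's implicit truncations, and your observation about leaving $(1+\uu^{p^2}g_1^p)^{p^2-1}$ unexpanded is exactly how the paper arrives at the stated nested form.
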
	
\begin{proof}
	Throughout the proof, we implicitly work modulo \((p,u_1)\), and all equalities are modulo \((\uu^{2p^2+p+1})\) unless otherwise stated.
	We remark that since \(p>2\),
	\[p^3 \geq 3p^2 > 2p^2 +p+1.\]
	
	To begin, we set \(h=3\) in Theorems \ref{tk} and \ref{th} and use that \(t_i = g_i \mod (p,u_1,\uu)\) and  \(g_i^{p^3}-g_i=0\) to obtain
	\begin{equation}\label{t0}
		t_0 = 1 + \uu t_1^{p^2} - \uu^p t_1 t_0^{p(p^2-1)}
	\end{equation}
	\begin{equation}\label{t1}
		t_1 = g_1 + \uu t_2^{p^2} - \uu^{p^2} t_2 t_0^{p^2(p^2-1)}
	\end{equation}
	\begin{equation}\label{t2}
		t_2 = g_2 + \uu g_3^{p^2} - \sum_{j=1}^{p-1}\frac{1}{p}{{p} \choose {j}} \uu^{jp +1}g_1^{j}  \mod (\uu^{p^2+1}).
	\end{equation}
	
	We will substitute \eqref{t1} and \eqref{t2} into \eqref{t0} repeatedly until we arrive at an expression for \(t_0\) in terms of \(g_1,g_2,\) and \(g_3\).

	We first record that
	\begin{equation}\label{t1p}
		t_1^p = g_1^p + \uu^p g_2
	\end{equation}
	\begin{equation}\label{t0p2}
		t_0^{p^2} = 1 + \uu^{p^2}g_1^p. 
	\end{equation}
	We substitute into \eqref{t0} to obtain
	\begin{align*}
		t_0 &= 1+ \uu(g_1^{p^2} +\uu^{p^2}g_2^p) - \uu^p\left(g_1 + \uu t_2^{p^2} - \uu^{p^2} t_2 t_0^{p^2(p^2-1)}\right) t_0^{p(p^2-1)}\\
		&= A - \uu^p\left(g_1 + \uu t_2^{p^2} - \uu^{p^2} t_2 (1 + \uu^{p^2}g_1^p)^{p^2-1}\right) t_0^{p(p^2-1)}\\
	\end{align*}
	where	
	\[ A =  1+ \uu(g_1^{p^2} +\uu^{p^2}g_2^p).\]

	By \eqref{t1p} and \eqref{t0p2},			
	\begin{align*}
		t_0^p 
		&=  1 + \uu^p g_1 - \uu^{p^2} t_1^p t_0^{p^{2}(p^2-1)} \\
		&=  1 + \uu^p g_1 - \uu^{p^2} (g_1^p + \uu^p g_2) \left(1+\uu^{p^2}g_1^p \right)^{(p^2-1)}.
	\end{align*}
	
	We record that	
	\begin{equation}\label{t2p2}
		t_2^{p^2} = g_2^{p^2} + \uu^{p^2}g_3^p \mod (\uu^{2p^2+p+1}). \end{equation}	
	
	This yields
	\begin{align*}
		t_0 &= A - \uu^p\left(g_1 + \uu (g_2^{p^2} + \uu^{p^2}g_3^p) - \uu^{p^2} t_2 (1 + \uu^{p^2}g_1^p)^{p^2-1}\right)\\
		&\hspace{4cm} \left(1 + \uu^p g_1 - \uu^{p^2} (g_1^p + \uu^p g_2) \left(1+\uu^{p^2}g_1^p \right)^{p^2-1}  \right)^{p^2-1}.
	\end{align*}
	
	Next, we use the fact that
	\begin{dmath*}
		\uu^{p^2+p}t_2
		= \uu^{p^2+p} (g_2 + \uu g_3^{p^2} - \sum_{j=1}^{p-1}\frac{1}{p}{{p} \choose {j}} \uu^{jp +1}g_1^{j}  ) \mod (\uu^{2p^2+p+1})
	\end{dmath*}
	to arrive at 
	\begin{align*}
		t_0 =& 1+ \uu(g_1^{p^2} +\uu^{p^2}g_2^p) \\
		&- \uu^p\left(g_1 + \uu (g_2^{p^2} + \uu^{p^2}g_3^p) - \uu^{p^2} \left( g_2 + \uu g_3^{p^2} - \sum_{j=1}^{p-1}\frac{1}{p}{{p} \choose {j}} \uu^{jp +1}g_1^{j}   \right) (1 + \uu^{p^2}g_1^p)^{p^2-1}\right)\\
		&\hspace{4cm} \left(1 + \uu^p g_1 - \uu^{p^2} (g_1^p + \uu^p g_2) (1+\uu^{p^2}g_1^p )^{p^2-1}  \right)^{p^2-1}.
	\end{align*}
\end{proof}	

Now that we have expressed \(t_0\) in terms of the \(g_i\)s, all that's left is to expand and collect like terms. 
For this, we will need the following lemma on binomial coefficients.
\begin{lemma}
	Let \(p\) be an odd prime, and let \(0\leq i \leq p^2-1 \). Then
	\begin{dmath*}
		{{p^2-1}\choose {i}} 
		= (-1)^i \mod (p)
	\end{dmath*}
\end{lemma}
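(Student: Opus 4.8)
The plan is to prove this by passing from binomial coefficients to a polynomial identity over $\FF_p$. Working in $\FF_p[x]$, iterate the Frobenius identity $(a+b)^p = a^p + b^p$ twice to get $(1+x)^{p^2} = 1 + x^{p^2}$. Separately, over $\ZZ$ one has the telescoping identity
\[
(1+x)\sum_{k=0}^{p^2-1}(-1)^k x^k = 1 + (-1)^{p^2-1}x^{p^2} = 1 + x^{p^2},
\]
where the last equality uses that $p$ is odd, so $p^2-1$ is even. (All intermediate terms in the product cancel in pairs.)

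Combining these two facts in $\FF_p[x]$ gives
\[
(1+x)\cdot (1+x)^{p^2-1} = (1+x)\sum_{k=0}^{p^2-1}(-1)^k x^k .
\]
Since $\FF_p[x]$ is an integral domain and $1+x \neq 0$, we may cancel the common factor $1+x$, obtaining $(1+x)^{p^2-1} = \sum_{k=0}^{p^2-1}(-1)^k x^k$ in $\FF_p[x]$. Comparing the coefficient of $x^i$ on both sides for each $0 \leq i \leq p^2-1$ yields $\binom{p^2-1}{i} \equiv (-1)^i \pmod p$.

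I do not anticipate any real obstacle. The only subtlety worth noting is that the naive term-by-term reduction $\binom{p^2-1}{i} = \big(\prod_{k=1}^{i}(p^2-k)\big)/i!$ modulo $p$ fails as soon as $i \geq p$, because then $i!$ is not a unit mod $p$; this is precisely why it is cleaner to argue via the polynomial identity above. An equivalent route, if preferred, is Lucas' theorem applied to the base-$p$ expansion $p^2-1 = (p-1)p + (p-1)$: writing $i = i_1 p + i_0$ with $0 \leq i_0,i_1 \leq p-1$, one gets $\binom{p^2-1}{i} \equiv \binom{p-1}{i_1}\binom{p-1}{i_0} \equiv (-1)^{i_0+i_1} \pmod p$ using the standard congruence $\binom{p-1}{j}\equiv(-1)^j$ for $0\leq j\leq p-1$, and then oddness of $p$ gives $(-1)^{i_0+i_1} = (-1)^{i_1 p + i_0} = (-1)^i$.
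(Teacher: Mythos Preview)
Your proof is correct. Your primary route is genuinely different from the paper's: you work in $\FF_p[x]$, combine the Frobenius identity $(1+x)^{p^2}=1+x^{p^2}$ with the telescoping product $(1+x)\sum_{k=0}^{p^2-1}(-x)^k=1+x^{p^2}$ (valid over $\ZZ$ since $p$ is odd), and cancel $1+x$ in the integral domain $\FF_p[x]$ to read off all coefficients at once. The paper instead proves $\binom{p-1}{k}\equiv(-1)^k\pmod p$ by induction from Pascal's rule and $\binom{p}{k}\equiv 0$, then applies Lucas' theorem to the base-$p$ expansion $i=i_1 p+i_0$ to get $\binom{p^2-1}{i}\equiv(-1)^{i_0+i_1}=(-1)^i$. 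Your alternative sketch at the end is exactly the paper's argument. The polynomial approach is slick and self-contained, avoiding any appeal to Lucas; the paper's approach is more hands-on but makes the role of the base-$p$ digits explicit, which is sometimes useful when one later needs related congruences such as those for $\binom{p^2-2}{i}$ that appear in the computations of Section~\ref{h=3}.
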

\begin{proof}
	We claim that \[{{p-1} \choose {k}} = (-1)^k \mod (p)\] for all \(0\leq k \leq p-1\). The case \(k=0\) is obvious, so assume \(1\leq k \leq p-1\). We have the equality
	\[{{p} \choose {k}} = {{p-1}\choose{k-1}} + {{p-1}\choose {k}}. \] Since the left-hand side is zero, an inductive argument proves the claim.
	
	We write \(i\) as \(i = pi_1 + i_0\) where \(0 \leq i_0,i_1\leq p-1\). Then 
	\begin{align*}
		{{p^2-1}\choose {i}} &\equiv {{p-1} \choose {i_1}}{{p-1}\choose{i_0}} \\
		&\equiv (-1)^{i_1+i_0}\\
		&= (-1)^i. \qedhere
	\end{align*}
\end{proof}

\begin{restatable}{theorem}{th=3}
	\label{th=3}
	Assume \(h=3\) and \(p>2\). Let \(g = 1+g_1S+g_2S^2+g_3S^3 \mod (S^4)\). Then modulo \((p,u_1,\uu^{2p^2+p+1})\),
	\begin{dmath*}
		t_0
		=1+ \uu g_1^{p^2}
		- \sum_{i=0}^{p-1}(-1)^i \uu^{(i+1)p}g_1^{i+1} 
		-\sum_{i=0}^{p-2} (-1)^i\uu^{(i+1)p +1}g_1^ig_2^{p^2}
		+\uu^{p^2+1}\left(g_2^p - g_1^{p-1}g_2^{p^2}\right)
		- \sum_{i=0}^{p}\uu^{p^2+(i+1)p}g_1^{i}\left( (-1)^{p+i}g_1^{p+1} +  {{p^2-2}\choose{i}}g_1^{p+1} + (-1)^{i+1}g_2   +{{p^2-2}\choose {i-1}}g_2   \right) 
		- \uu^{p^2+p+1}\left(  g_3^p - g_3^{p^2} 
		\right)	
		-\sum_{j=1}^{p-1}\uu^{p^2+(j+1)p+1}g_1^{j-1} \left[C_j +\sum_{i=1}^j \frac{1}{p}{{p}\choose{i}}(-1)^{j-i}g_1   \right],
	\end{dmath*}	
	where
	\begin{dmath*}
		C_j =  (-1)^{j+p}g_1^{p+1}g_2^{p^2} + g_1\left( (-1)^j(g_3^p-g_3^{p^2}) + {{p^2-2}\choose{j}}g_1^pg_2^{p^2}  \right) + {{p^2-2} \choose{j-1}}g_2^{p^2+1}.
	\end{dmath*}
\end{restatable}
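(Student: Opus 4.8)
The plan is to take the closed expression for $t_0$ furnished by Lemma \ref{t0h=3lem} and simply expand it as a power series in $\uu$, collecting like terms and discarding everything divisible by $\uu^{2p^2+p+1}$; since that lemma already records $t_0$ modulo $(p,u_1,\uu^{2p^2+p+1})$, nothing but algebra remains. As in the proof of Lemma \ref{t0h=3lem}, $p>2$ gives $p^3\geq 3p^2>2p^2+p+1$, so no exponent of a $g_i$ occurring in the computation ever reaches $p^3$ and the relations $g_i^{p^3}=g_i$ are never needed again; every binomial coefficient $\binom{p^2-1}{k}$ that arises has $0\leq k\leq p^2-1$ and hence, by the binomial lemma above, equals $(-1)^k$ modulo $p$, and I will also use its immediate consequence $\binom{p^2-2}{k}\equiv(-1)^k(k+1)\pmod p$.

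Write the formula of Lemma \ref{t0h=3lem} as $t_0=A-\uu^p\,B\,C^{p^2-1}$, with $A=1+\uu g_1^{p^2}+\uu^{p^2+1}g_2^p$, with $B$ the large bracketed factor, and $C=1+\uu^p g_1-\uu^{p^2}(g_1^p+\uu^p g_2)(1+\uu^{p^2}g_1^p)^{p^2-1}$. First I would expand the innermost factor $(1+\uu^{p^2}g_1^p)^{p^2-1}\equiv\sum_k(-1)^k\uu^{kp^2}g_1^{kp}$; everywhere it occurs it carries a prefactor $\uu^{p^2}$, so only $1-\uu^{p^2}g_1^p$ survives the truncation. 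Substituting, one obtains the explicit truncations $C=(1+\uu^p g_1)+W$ with $W=-\uu^{p^2}g_1^p-\uu^{p^2+p}g_2+\uu^{2p^2}g_1^{2p}$ modulo $\uu^{2p^2+1}$, and finds that the only terms of $B$ of $\uu$-degree below $p^2$ are $g_1$ and $\uu g_2^{p^2}$, the remaining terms of $B$ being $-\uu^{p^2}g_2$, $\uu^{p^2+1}(g_3^p-g_3^{p^2})$, the terms $\tfrac1p\binom pj\,\uu^{p^2+jp+1}g_1^j$ for $1\leq j\leq p-1$, and $\uu^{2p^2}g_1^p g_2$, all of $\uu$-degree at least $p^2$. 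Since $\uu^p B\,C^{p^2-1}$ is needed only modulo $\uu^{2p^2+p+1}$, it suffices to know $B\,C^{p^2-1}$ modulo $\uu^{2p^2+1}$.

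Next I would expand $C^{p^2-1}=\sum_{m\geq 0}\binom{p^2-1}{m}W^m(1+\uu^p g_1)^{p^2-1-m}$. Because $W$ has lowest $\uu$-degree $p^2$, only $m=0,1,2$ contribute modulo $\uu^{2p^2+1}$, and each factor $(1+\uu^p g_1)^{p^2-1-m}$ is expanded by the binomial lemma; the $\binom{p^2-2}{i}$ coefficients appearing in the statement come precisely from the $m=1$ term. This gives $C^{p^2-1}$ explicitly modulo $\uu^{2p^2+1}$. I would then multiply by $B$, multiply by $-\uu^p$, add $A$, and sort by powers of $\uu$, reindexing the resulting sums to match the degrees $\uu,\ \uu^{(i+1)p},\ \uu^{(i+1)p+1},\ \uu^{p^2+1},\ \uu^{p^2+(i+1)p},\ \uu^{p^2+p+1},\ \uu^{p^2+(j+1)p+1}$ that occur in the claim. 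The coefficients $\tfrac1p\binom pj$ for $1\leq j\leq p-1$ are integers (as $p\mid\binom pj$) but in general nonzero modulo $p$, so they are carried symbolically, and $C_j$ is nothing but the bundle of terms landing at degree $\uu^{p^2+(j+1)p+1}$, assembling the $g_3$-contribution coming from the $\uu^{p^2+1}$ term of $B$ against $C^{p^2-1}$ together with the $g_1^{p+1}g_2^{p^2}$-, $g_1^{p}g_2^{p^2}$- and $g_2^{p^2+1}$-contributions coming from the $\uu g_2^{p^2}$ term of $B$ against the degree-$(p^2+jp)$ part of $C^{p^2-1}$.

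The computation is elementary but long, and the main obstacle is purely organizational. Most monomials of the final formula have several sources: for example the coefficient of $\uu^{p^2+(i+1)p}$ collects $g_1$ against the degree-$(p^2+ip)$ part of $C^{p^2-1}$ (which supplies the $(-1)^{p+i}g_1^{p+1}$ and $\binom{p^2-2}{i}g_1^{p+1}$ pieces from the $m=0$ and $m=1$ terms, and the $\binom{p^2-2}{i-1}g_2$ piece from the $m=1$ term) together with the degree-$p^2$ term $-g_2$ of $B$ against the degree-$ip$ part of $C^{p^2-1}$ (the $(-1)^{i+1}g_2$ piece), and these must be combined. The delicate point is therefore to list, for each target exponent of $\uu$ up to $2p^2$, the complete finite set of contributing pairs (one of $m=0,1,2$ and a power of $(1+\uu^p g_1)$, times a term of $B$) and to drop none of the cross terms near the top of the range, where contributions with $i$ as large as $2p$ in the expansion of $(1+\uu^p g_1)^{p^2-1}$ and the $\tfrac1p\binom pj$-terms of $B$ still matter; several of these cancel only after invoking $\binom{p^2-2}{k}\equiv(-1)^k(k+1)$. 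Once this table is assembled the theorem drops out by collecting like terms and comparing with the claimed formula.
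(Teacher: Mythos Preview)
Your proposal is correct and follows essentially the same route as the paper: start from Lemma~\ref{t0h=3lem}, replace $(1+\uu^{p^2}g_1^p)^{p^2-1}$ by $1-\uu^{p^2}g_1^p$ after truncation, expand the outer $(p^2-1)$th power via the binomial theorem keeping only the $m=0,1,2$ contributions, and then sort by powers of $\uu$. The paper organizes the bookkeeping slightly differently (it writes $t_0=A-BC$, peels off an auxiliary term $D$, and then splits the remainder as $\uu^pR+\uu^{p+1}S$ according to the residue of the $\uu$-exponent modulo $p$), but this is purely notational; your target-exponent table is the same computation. One small remark: the identity $\binom{p^2-2}{k}\equiv(-1)^k(k+1)$ is correct but is not actually needed to reach the stated formula, which retains the symbols $\binom{p^2-2}{i}$ unreduced; the few top-degree cancellations (e.g.\ of $\uu^{2p^2+p}g_1^{2p+1}$) happen directly without invoking it.
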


\begin{proof} 
	As before, we implicitly work modulo \((p,u_1)\), and all equalities are modulo \((\uu^{2p^2+p+1})\).
	
	We reference Lemma \ref{t0h=3lem} and first compute
	\begin{dmath*}
		\uu^{p^2}\left(1+\uu^{p^2}g_1^p \right)^{p^2-1} = \sum_{i=0}^{p^2-1}(-1)^i \uu^{(i+1)p^2}g_1^{pi}
		=\uu^{p^2}(1-\uu^{p^2}g_1^p).
	\end{dmath*}
	This yields
	\begin{align*}
		t_0 &= A - \uu^p\left(g_1 + \uu (g_2^{p^2} + \uu^{p^2}g_3^p) - \uu^{p^2} \left( g_2 + \uu g_3^{p^2} - \sum_{j=1}^{p-1}\frac{1}{p}{{p} \choose {j}} \uu^{jp +1}g_1^{j}   \right) (1-\uu^{p^2}g_1^p)\right)\\
		&\hspace{4cm} \left(1 + \uu^p g_1 - \uu^{p^2} (g_1^p + \uu^p g_2) (1-\uu^{p^2}g_1^p) \right)^{p^2-1}
	\end{align*}	
	where	
	\[ A =  1+ \uu(g_1^{p^2} +\uu^{p^2}g_2^p).\]

	We set
	\begin{dmath*}B
		= \uu^p\left(g_1 + \uu (g_2^{p^2} + \uu^{p^2}g_3^p) - \uu^{p^2} \left( g_2 + \uu g_3^{p^2} - \sum_{j=1}^{p-1}\frac{1}{p}{{p} \choose {j}} \uu^{jp +1}g_1^{j}   \right) (1-\uu^{p^2}g_1^p)\right)
		= \uu^pg_1 + \uu^{p+1}g_2^{p^2} + \uu^{p^2+p+1}g_3^p - \uu^{p^2+p}g_2 - \uu^{p^2+p+1} g_3^{p^2} + \sum_{j=1}^{p-1}\frac{1}{p}{{p} \choose {j}} \uu^{p^2+(j+1)p +1}g_1^{j}    +\uu^{2p^2+p}g_1^p  g_2      
	\end{dmath*}
	and
	\[C =  \left(1 + \uu^p g_1 + \uu^{p^2} (g_1^p + \uu^p g_2) (\uu^{p^2}g_1^p -1) \right)^{p^2-1}\]
	so that
	\[t_0 = A-BC.\]
	
	Next, we expand \(C\) as 
	\begin{align*}
		C	=& \sum_{i=0}^{p^2-1}(-1)^i (1+\uu^pg_1)^{p^2-1-i}\uu^{ip^2}(g_1^p + \uu^p g_2)^i (\uu^{p^2}g_1^p-1)^i \\
		=& (1+\uu^pg_1)^{p^2-1}\\
		&- (1+\uu^pg_1)^{p^2-2}\uu^{p^2}(g_1^p + \uu^p g_2) (\uu^{p^2}g_1^p-1)\\
		&+  (1+\uu^pg_1)^{p^2-3}\uu^{2p^2}(g_1^p + \uu^p g_2)^2 (\uu^{p^2}g_1^p-1)^2.
	\end{align*}		
	
	This yields
	
	\begin{dmath*}
		BC 
		= B(1+\uu^pg_1)^{p^2-1} 
		- \uu^{p^2}\left(\uu^pg_1 + \uu^{p+1}g_2^{p^2} - \uu^{p^2+p}g_2 \right)(1+\uu^pg_1)^{p^2-2}(-g_1^p-\uu^p g_2 + \uu^{p^2}g_1^{2p})
		+\uu^{2p^2+p}g_1^{2p+1} 
		= B(1+\uu^pg_1)^{p^2-1} -D
		+\uu^{2p^2+p}g_1^{2p+1} 
	\end{dmath*}		
	where
	\begin{align*}
		D=& \uu^{p^2}\left(\uu^pg_1 + \uu^{p+1}g_2^{p^2} - \uu^{p^2+p}g_2 \right)(1+\uu^pg_1)^{p^2-2}(-g_1^p - \uu^p g_2 + \uu^{p^2}g_1^{2p})\\
		=& -\left(\uu^{p^2+p}g_1^{p+1} + \uu^{p^2+p+1}g_1^{p}g_2^{p^2} - \uu^{2p^2+p}g_1^pg_2\right)(1+\uu^pg_1)^{p^2-2} \\
		&-\left(\uu^{p^2+2p}g_1g_2 + \uu^{p^2+2p+1}g_2^{p^2+1}   \right)(1+\uu^pg_1)^{p^2-2} \\
		&+ \uu^{2p^2+p}g_1^{2p+1}(1+\uu^pg_1)^{p^2-2}.
	\end{align*}	
	We next expand
	\[
	(1+\uu^p g_1)^{p^2-2} = \sum_{i=0}^{p^2-2}{{p^2-2}\choose {i}} \uu^{ip}g_1^i  \]
	
	and substitute to obtain
	\begin{dmath*}
		D =  - \sum_{i=0}^{p}{{p^2-2}\choose {i}} \uu^{p^2+(i+1)p}g_1^{p+i+1}
		- \sum_{i=0}^{p-1}{{p^2-2}\choose {i}} \uu^{p^2+(i+1)p+1}g_1^{p+i}g_2^{p^2}
		- \sum_{i=0}^{p-1}{{p^2-2}\choose {i}} \uu^{p^2+ (i+2)p}g_1^{i+1}g_2
		- \sum_{i=0}^{p-2}{{p^2-2}\choose {i}} \uu^{p^2+ (i+2)p+1}g_1^ig_2^{p^2+1} \\
		+  \uu^{2p^2+p}(g_1^{p}g_2
		+ g_1^{2p+1}).
	\end{dmath*}

	We next focus on \(B(1+\uu^pg_1)^{p^2-1}\). We expand as before to obtain
	
	\begin{align*}
		B(1+\uu^pg_1)^{p^2-1} 
		=& \sum_{i=0}^{2p}(-1)^i \uu^{(i+1)p}g_1^{i+1}
		+\sum_{i=0}^{2p-1}(-1)^i \uu^{(i+1)p+1}g_1^{i}g_2^{p^2}\\
		&-\sum_{i=0}^{p}(-1)^i \uu^{p^2+(i+1)p}g_1^{i}g_2
		+\sum_{i=0}^{p-1}(-1)^i \uu^{p^2+(i+1)p+1}g_1^{i}(g_3^p-g_3^{p^2})\\
		&+	\sum_{i=0}^{p-1}\sum_{j=1}^{p-1}\frac{1}{p}{{p} \choose {j}}(-1)^i {u_2}^{p^2+(i+j+1)p +1}g_1^{i+j}\\
		&+ \uu^{2p^2+p}g_1^{p}g_2.
	\end{align*}
	We pause to simplify the double sum above as
	\begin{align*}
		\sum_{i=0}^{p-1}\sum_{j=1}^{p-1}\frac{1}{p}{{p} \choose {j}}(-1)^i {u_2}^{p^2+(i+j+1)p +1}g_1^{i+j}  &=  \sum_{j=1}^{p-1}\sum_{i=0}^{p-1}\frac{1}{p}{{p} \choose {j}}(-1)^i {u_2}^{p^2+(i+j+1)p +1}g_1^{i+j} \\
		&=  \sum_{j=1}^{p-1}\sum_{i=0}^{p-j-1}\frac{1}{p}{{p} \choose {j}}(-1)^i {u_2}^{p^2+(i+j+1)p +1}g_1^{i+j} \\
		&=  \sum_{j=1}^{p-1}\sum_{i=1}^{p-j}\frac{1}{p}{{p} \choose {j}}(-1)^{i-1} {u_2}^{p^2+(i+j)p +1}g_1^{i+j-1}.
	\end{align*}

	We put everything together to obtain
	\begin{dmath*}
		t_0 = A - B(1+\uu^pg_1)^{p^2-1} + D - \uu^{2p^2+p}g_1^{2p+1}
		=1+ \uu(g_1^{p^2} +\uu^{p^2}g_2^p)
		- \sum_{i=0}^{2p}(-1)^i \uu^{(i+1)p}g_1^{i+1}
		-\sum_{i=0}^{2p-1}(-1)^i \uu^{(i+1)p+1}g_1^{i}g_2^{p^2}
		- \sum_{i=0}^{p} \uu^{p^2+(i+1)p}g_1^i\left({{p^2-2}\choose {i}}g_1^{p+1} -(-1)^ig_2\right)
		-\sum_{i=0}^{p-1} \uu^{p^2+(i+1)p+1}g_1^{i}\left((-1)^i(g_3^p-g_3^{p^2}) + {{p^2-2}\choose {i}}g_1^pg_2^{p^2}\right)
		-\sum_{j=1}^{p-1}\sum_{i=1}^{p-j}\frac{1}{p}{{p}\choose {j}}(-1)^{i-1} \uu^{p^2+(i+j)p+1}g_1^{i+j-1}
		- \sum_{i=0}^{p-1}{{p^2-2}\choose {i}} \uu^{p^2+ (i+2)p}g_1^{i+1}g_2
		- \sum_{i=0}^{p-2}{{p^2-2}\choose {i}} \uu^{p^2+ (i+2)p+1}g_1^ig_2^{p^2+1} \\
		= 1+ \uu g_1^{p^2} + \uu^p R + \uu^{p+1}S 
	\end{dmath*}
	where
	\begin{align*}
		\uu^p R = &- \sum_{i=0}^{2p}(-1)^i \uu^{(i+1)p}g_1^{i+1} 
		- \sum_{i=0}^{p} \uu^{p^2+(i+1)p}g_1^i\left({{p^2-2}\choose {i}}g_1^{p+1} -(-1)^{i}g_2\right) \\
		&- \sum_{i=0}^{p-1}{{p^2-2}\choose {i}} \uu^{p^2+ (i+2)p}g_1^{i+1}g_2 
	\end{align*}
	and
	\begin{dmath*}
		\uu^{p+1}S =  \uu^{p^2+1}g_2^p -\sum_{i=0}^{2p-1}(-1)^i \uu^{(i+1)p+1}g_1^{i}g_2^{p^2}
		-\sum_{i=0}^{p-1} \uu^{p^2+(i+1)p+1}g_1^{i}\left((-1)^i(g_3^p-g_3^{p^2}) + {{p^2-2}\choose {i}}g_1^pg_2^{p^2}\right)
		-\sum_{j=1}^{p-1}\sum_{i=1}^{p-j}\frac{1}{p}{{p}\choose {j}}(-1)^{i-1} \uu^{p^2+(i+j)p+1}g_1^{i+j-1} 
		- \sum_{i=0}^{p-2}{{p^2-2}\choose {i}} \uu^{p^2+ (i+2)p+1}g_1^ig_2^{p^2+1}. 
	\end{dmath*}
	
	Now we simplify to obtain
	\begin{dmath*}
		\uu^pR
		= - \sum_{i=0}^{p-1}(-1)^i \uu^{(i+1)p}g_1^{i+1}
		- \sum_{i=0}^{p}(-1)^{p+i} \uu^{p^2+(i+1)p}g_1^{p+i+1}
		- \sum_{i=0}^{p} \uu^{p^2+(i+1)p}g_1^i\left({{p^2-2}\choose {i}}g_1^{p+1} + (-1)^{i+1}g_2\right)  
		- \sum_{i=0}^{p}{{p^2-2}\choose {i-1}} \uu^{p^2+ (i+1)p}g_1^{i}g_2.
	\end{dmath*}
	We follow the convention that \({{m}\choose{n}}=0\) if \(n<0\). Therefore
	\begin{dmath*}
		\uu^pR	= - \sum_{i=0}^{p-1}(-1)^i \uu^{(i+1)p}g_1^{i+1} 
		- \sum_{i=0}^{p}\uu^{p^2+(i+1)p}g_1^{i}\left( (-1)^{p+i}g_1^{p+1} +  {{p^2-2}\choose{i}}g_1^{p+1} + (-1)^{i+1}g_2   +{{p^2-2}\choose{i-1}}g_2   \right).
	\end{dmath*}

	Next, we simplify \(\uu^{p+1}S\). Ignoring the double sum, we have
	\begin{dmath*}
		\uu^{p^2+1}g_2^p	-\sum_{i=0}^{2p-1}(-1)^i \uu^{(i+1)p+1}g_1^{i}g_2^{p^2}
		-\sum_{i=0}^{p-1} \uu^{p^2+(i+1)p+1}g_1^{i}\left((-1)^i(g_3^p-g_3^{p^2}) + {{p^2-2}\choose {i}}g_1^pg_2^{p^2}\right)
		- \sum_{i=0}^{p-2}{{p^2-2}\choose {i}} \uu^{p^2+ (i+2)p+1}g_1^ig_2^{p^2+1}\\
		=\uu^{p^2+1}g_2^p -\sum_{i=0}^{p-1}(-1)^i \uu^{(i+1)p+1}g_1^{i}g_2^{p^2}  
		 + \uu^{p^2+p+1}g_1^{p}g_2^{p^2}
		-\sum_{i=1}^{p-1}(-1)^{i+p} \uu^{p^2+(i+1)p+1}g_1^{p+i}g_2^{p^2}\\
		-\uu^{p^2+p+1}\left((g_3^p-g_3^{p^2}) + g_1^pg_2^{p^2}\right)
		-\sum_{i=1}^{p-1} \uu^{p^2+(i+1)p+1}g_1^{i}\left((-1)^i(g_3^p-g_3^{p^2}) + {{p^2-2}\choose {i}}g_1^pg_2^{p^2}\right)
		- \sum_{i=1}^{p-1}{{p^2-2}\choose {i-1}} \uu^{p^2+ (i+1)p+1}g_1^{i-1}g_2^{p^2+1}\\
		=  \uu^{p^2+1}(g_2^p-g_1^{p-1}g_2^{p^2})	-\sum_{i=0}^{p-2} (-1)^i\uu^{(i+1)p +1}g_1^ig_2^{p^2}
		- \uu^{p^2+p+1}\left( g_3^p - g_3^{p^2} 
		\right)	
		-\sum_{i=1}^{p-1}\uu^{p^2+(i+1)p +1}g_1^{i-1}C_i
	\end{dmath*}
	where
	\begin{dmath*}
		C_i =  (-1)^{i+p}g_1^{p+1}g_2^{p^2} + g_1\left( (-1)^i(g_3^p-g_3^{p^2}) + {{p^2-2}\choose{i}}g_1^pg_2^{p^2}  \right) + {{p^2-2} \choose{i-1}}g_2^{p^2+1}.
	\end{dmath*}
	
	We claim that
	\begin{dmath*}
		-\sum_{i=1}^{p-1}\uu^{p^2+(i+1)p+1}g_1^{i-1}C_i - \sum_{j=1}^{p-1}\sum_{i=1}^{p-j}\frac{1}{p}{{p}\choose{j}} (-1)^{i-1} \uu^{p^2+(i+j)p+1}g_1^{i+j-1} 
		=-\sum_{j=1}^{p-1}\uu^{p^2+(j+1)p+1}g_1^{j-1} \left[C_j +\sum_{i=1}^j \frac{1}{p}{{p}\choose{i}}(-1)^{j-i}g_1   \right].
	\end{dmath*}

	To prove the claim, it suffices to show
	\begin{equation}\label{double sum} \sum_{j=1}^{p-1}\sum_{i=1}^{p-j}\frac{1}{p}{{p}\choose{j}} (-1)^{i-1} \uu^{p^2+(i+j)p+1}g_1^{i+j-1} =\sum_{k=1}^{p-1}\sum_{r=1}^k \frac{1}{p}{{p}\choose{r}}(-1)^{k-r}\uu^{p^2+(k+1)p+1}g_1^{k}.
	\end{equation}
	We set \[k = i+j-1, \ r=j\] in the left-hand side of \eqref{double sum}. We observe that
	\[1 \leq k \leq p-j+j-1 = p-1\]
	and that \(1 \leq j \leq i+j-1\) implies \[1 \leq r \leq k.\] We substitute, then switch back to \(i,j\) to prove the claim.
	
	Therefore 
	\begin{dmath*}
		\uu^{p+1}S = 
		-\sum_{i=0}^{p-2} (-1)^i\uu^{(i+1)p +1}g_1^ig_2^{p^2}
		+\uu^{p^2+1}\left(g_2^p - g_1^{p-1}g_2^{p^2}\right)
		- \uu^{p^2+p+1}\left(  g_3^p - g_3^{p^2}
		\right)	
		-\sum_{j=1}^{p-1}\uu^{p^2+(j+1)p+1}g_1^{j-1} \left[C_j +\sum_{i=1}^j \frac{1}{p}{{p}\choose{i}}(-1)^{j-i}g_1   \right].
	\end{dmath*}
	
	We conclude that
	\begin{dmath*}
		t_0 = 1+ \uu g_1^{p^2} + \uu^p R + \uu^{p+1}S 
		=1+ \uu g_1^{p^2}
		- \sum_{i=0}^{p-1}(-1)^i \uu^{(i+1)p}g_1^{i+1} 
		- \sum_{i=0}^{p}\uu^{p^2+(i+1)p}g_1^{i}\left( (-1)^{p+i}g_1^{p+1} +  {{p^2-2}\choose{i}}g_1^{p+1} + (-1)^{i+1}g_2   +{{p^2-2}\choose {i-1}}g_2   \right) \\
		-\sum_{i=0}^{p-2} (-1)^i\uu^{(i+1)p +1}g_1^ig_2^{p^2}
		+\uu^{p^2+1}\left(g_2^p - g_1^{p-1}g_2^{p^2}\right)
		- \uu^{p^2+p+1}\left(  g_3^p - g_3^{p^2} 
		\right)	
		-\sum_{j=1}^{p-1}\uu^{p^2+(j+1)p+1}g_1^{j-1} \left[C_j +\sum_{i=1}^j \frac{1}{p}{{p}\choose{i}}(-1)^{j-i}g_1   \right].
	\end{dmath*}
We rearrange to complete the proof.		
\end{proof}

	\bibliographystyle{plain}

	\bibliography{references}

\begin{thebibliography}{10}

\bibitem{beaudry2019computations}
Agn\`es Beaudry, Naiche Downey, Connor McCranie, Luke Meszar, Andy Riddle, and
  Peter Rock.
\newblock Computations of orbits for the lubin--tate ring.
\newblock {\em Journal of Homotopy and Related Structures}, 14(3):691--718,
  2019.

\bibitem{chai1996group}
Ching-Li Chai.
\newblock The group action on the closed fiber of the {L}ubin-{T}ate moduli
  space.
\newblock {\em Duke Mathematical Journal}, 82(3):725--754, 1996.

\bibitem{devinatz1995action}
Ethan~S Devinatz and Michael~J Hopkins.
\newblock The action of the {M}orava stabilizer group on the {L}ubin--{T}ate
  moduli space of lifts.
\newblock {\em American Journal of Mathematics}, 117(3):669--710, 1995.

\bibitem{devinatz2004homotopy}
Ethan~S Devinatz and Michael~J Hopkins.
\newblock Homotopy fixed point spectra for closed subgroups of the {M}orava
  stabilizer groups.
\newblock {\em Topology}, 43(1):1--47, 2004.

\bibitem{henn2013homotopy}
Hans-Werner Henn, Nasko Karamanov, and Mark Mahowald.
\newblock The homotopy of the {$K(2)$}-local moore spectrum at the prime {$3$}
  revisited.
\newblock {\em Mathematische Zeitschrift}, 275(3-4):953--1004, 2013.

\bibitem{hopkins1994equivariant}
MJ~Hopkins and BH~Gross.
\newblock Equivariant vector bundles on the {L}ubin-{T}ate moduli space.
\newblock {\em Contemporary Mathematics}, 158:23--23, 1994.

\bibitem{kohlhaase2013iwasawa}
Jan Kohlhaase.
\newblock On the {I}wasawa theory of the {L}ubin--{T}ate moduli space.
\newblock {\em Compositio Mathematica}, 149(5):793--839, 2013.

\bibitem{lader2013resolution}
Olivier Lader.
\newblock {\em Une r{\'e}solution projective pour le second groupe de {M}orava
  pour {\( p \geq 5 \)} et applications}.
\newblock PhD thesis, Universit{\'e} de Strasbourg, 2013.

\bibitem{morse1954methods}
Philip~M Morse and Herman Feshbach.
\newblock Methods of theoretical physics.
\newblock {\em American Journal of Physics}, 22(6):410--413, 1954.

\bibitem{ravenel2003complex}
Douglas~C Ravenel.
\newblock {\em Complex cobordism and stable homotopy groups of spheres}.
\newblock American Mathematical Soc., 2003.

\bibitem{rezk1998notes}
Charles Rezk.
\newblock Notes on the {H}opkins-{M}iller theorem.
\newblock {\em Contemporary Mathematics}, 220:313--366, 1998.

\end{thebibliography}
	
\end{document}